\newcommand{\R}{\mathbb{R}}
\newcommand{\p}{\mathbb{P}}
\newcommand{\E}{\mathbb{E}}
\newcommand{\N}{\mathbb{N}}
\newcommand{\F}{\mathcal{F}}
\newcommand{\1}{\mathds{1}}
\newcommand{\dd}{\mathrm{d}}
\newcommand{\comR}[1]{{\color{red}#1}}
\numberwithin{equation}{section}
\theoremstyle{plain}
\newtheorem{theo}{Theorem}[section]
\theoremstyle{remark}
\newtheorem{re}{Remark}[section]
\theoremstyle{proposition}
\newtheorem{prop}{Proposition}[section]
\theoremstyle{lemma}
\newtheorem{lemma}{Lemma}[section]
\begin{document}

\begin{frontmatter}
\title{Estimation of the marginal expected shortfall under asymptotic independence}
\runtitle{ MES estimation under asymptotic independence}

\begin{aug}
\author{\fnms{Juan-Juan} \snm{Cai}\thanksref{e1}\ead[label=e1,mark]{j.j.cai@tudelft.nl}}
\and
\author{\fnms{Eni} \snm{Musta}\thanksref{e1}\ead[label=e2,mark]{e.musta@tudelft.nl}}


\address{Delft Institute of Applied Mathematics, Mekelweg 4, 2628 CD Delft, The Netherlands.
\printead{e1,e2}}


\runauthor{Cai and Musta}

\affiliation{Delft University of Technology}

\end{aug}

\begin{abstract}
We study the asymptotic behavior of the marginal expected shortfall when the two random variables are asymptotic independent but positive associated, which is modeled by the so-called tail dependent coefficient. We construct an estimator of the marginal expected shortfall {which is shown to be asymptotically normal. The finite sample performance of the estimator is investigated in a small simulation study.}
The method is also applied to estimate the expected amount of rainfall at a weather station given that there is a once every 100 years rainfall at another weather station nearby. 
\end{abstract}

\begin{keyword}
\kwd{marginal expected shortfall}
\kwd{asymptotic independence}
\kwd{tail dependent coefficient}
\end{keyword}

\end{frontmatter}

\section{Introduction}
Let $X$ and $Y$ denote two risk factors. The marginal expected shortfall (MES)  is defined as $\mathbb{E}[X|Y>Q_Y(1-p)]$, where $Q_Y$ is the quantile function of $Y$ and $p$ is a {\it small} probability. The name of MES originates from its application in finance as an important ingredient for constructing a systemic risk measure; see for instance \citep{Acharyaetal2012} and \citep{CaporinSantucci12}.  In actuarial science, this quantity is known as the multivariate extensions of tail conditional expectation (or, conditional tail expectation); see for instance \citep{CaiLi2005} and \citep{Cousin2014}.


Under the assumption that $X$ is in the  Fr\'echet domain of attraction, \citep{Caietal2015} has established the following asymptotic limit; see Proposition 1 in that paper. With $Q_X$  the quantile function of $X$, 
\begin{align}
\lim_{p\rightarrow 0}\frac{\E[X|Y>Q_Y(1-p)]}{Q_X(1-p)}=a \in[0,\infty) \label{eq: MESdep}
\end{align}
where $a>0$ if $X$ and $Y$ are asymptotic dependent and $a=0$ if they are asymptotic independent. Based on this result, an estimator for MES is established in \citep{Caietal2015}, which is not applicable for asymptotic independent data. It is the goal of this paper to study the asymptotic behavior of MES and to develop an estimation of MES for asymptotic independent data.

Under the framework of multivariate extreme value theory, there are various ways to describe asymptotic dependence, for instance by means of exponent measure, spectral measure or Pickands dependence functions, cf. Chapter 6 of  \citep{deHaanFerreira2006} and Chapter 8  of  \citep{Beirlantetal2004}.  However, these measures don't distinguish the relative strength of the extremal dependence for an asymptotic independent pair. The so-called {\it coefficient of tail dependence} introduced by \citep{LedfordTawn1996} is mostly used to  measure the strength of the extremal dependence for an asymptotic independent pair. In this paper,  we make use  of the coefficient of tail dependence, denoted as $\eta$ to model asymptotic independence. Namely, we assume that there exists an $\eta \in (0,1]$ such that the following limit exists and is positive:
\[
\lim_{p\rightarrow0} p^{-\frac{1}{\eta}}\p\left(X>Q_X(1-p) \text{ and  } Y>Q_Y(1-p)   \right).
\]
We are interested in the scenario that $\eta\in (1/2, 1) $, which corresponds to asymptotic independence but positive association of $X$ and $Y$. For this type of distributions, one has
$$
\p\left(X>Q_X(1-p) \text{ and  } Y>Q_Y(1-p)   \right)\gg \p\left(X>Q_X(1-p)\right) \p\left(Y>Q_Y(1-p)   \right),
$$
that is, the joint extremes of $(X, Y)$ happen much more often than those of a distribution with independent components of $X$ and $Y$. This gives an intuitive explanation that even if the pair are asymptotic independent, the extremal dependence can still be strong and thus needs to be accounted for. We also assume that $X$ is in the  Fr\'echet domain of attraction, so it has a heavy right tail. As our result shall reveal, the risk represented by MES can also be very big under the combination of positive association and  $X$ being heavy tailed, cf. Proposition \ref{prop:limit_theta_p}. Thus from the application point of view, it is very important to consider MES for such a model assumption.

This paper is organised as follows. Section \ref{sc: main results} contains the main theoretical results on the limit behaviour of $MES$ and the asymptotic normality of the proposed estimator of MES. The performance of the estimation method is illustrated by a simulation study in Section \ref{sc: simulation} and by an application to precipitation data in Section \ref{sc: application}. The proofs of the main theorems  are provided in Section \ref{sc: proof}.


\section{Main results} \label{sc: main results}
We first derive the asymptotic limit for MES as $p\rightarrow 0$, based on which we shall then construct an estimator for MES. Let $F_1$ and $F_2$ denote the marginal distribution functions of $X$ and $Y$, respectively.  As usual in extreme value analysis, it is more convenient to work with, in stead of the quantile function, the tail quantile defined as 
$U_j=\left(\frac{1}{1-F_j} \right)^\leftarrow,  j=1,2,$ where $\leftarrow$ denotes the left continuous inverse. Then MES can be written as 
\[
\E\left[X|Y>U_2(1/p)\right]=:\theta_p.
\]

We now present our model,  namely, assumptions on the tail distribution of $X$ and the extremal dependence of $X$ and $Y$. First, we assume that $X$ has a heavy right tail, that is, there exists a $\gamma_1>0$ such that
\begin{align}
\lim_{t\rightarrow \infty}\frac{U_1(tx)}{U_1(t)}=x^{\gamma_1}, \qquad x>0. \label{eq: rv_U1}
\end{align}
Second, we assume the positive association of $X$ and $Y$. Precisely, there exists an $\eta\in(1/2, 1]$ such that for all $(x, y) \in (0,\infty)^2$, the following limit exists
\begin{equation}
\lim_{t\to\infty}t^{\frac{1}{\eta}}\p\left(X>U_1(t/x),Y>U_2(t/y)\right)=:c(x,y)\in(0, \infty).  \label{eq: asymind}
\end{equation}
As a consequence, $c$ is a homogeneous function of order $\frac{1}{\eta}$. The condition of \eqref{eq: asymind} is also assumed in \citep{DDFH04} for estimating $\eta$.
Note that if $\eta=1$, it corresponds to $X$ and $Y$ being asymptotic dependent. For $\eta<1$, this condition is linked to the so-called hidden regular variation (cf.\ \citep{Resnick2002}) in the following way:
$$
\nu^*((x,\infty]\times (y,\infty])=c(x^{-1/\gamma_1}, y^{-1/\gamma_1}),
$$
where $\nu*$ is defined in (3) of \citep{Resnick2002}.

In order to obtain the limit result on $\theta_p$ for $p\rightarrow 0$, we need a second order strengthening condition of \eqref{eq: rv_U1}.  
\begin{description}
\item[A(1)] There exists $d\in(0,\infty)$ such that 
\[
\lim_{t\to\infty}\frac{U_1(t)}{t^{\gamma_1}}=d.
\]
\end{description}
We also need some technical conditions on the extremal dependence of $X$ and $Y$. For $t>0$, define
\begin{equation}
c_t(x,y)=t^{\frac{1}{\eta}}\p\left(X>U_1(t/x),Y>U_2(t/y)\right),\qquad 0<x,y<t.
\end{equation}
\begin{description}
\item[A(2)] There exists $\beta_1>\gamma_1$ such that $\lim_{t\to\infty}\sup_{x\leq 1}\left|c_t(x,1)-c(x,1)\right|x^{-\beta_1}=0$.
\item[A(3)] There exists $0<\beta_2<\gamma_1$ such that $\lim_{t\to\infty}\sup_{1<x<t}\left|c_t(x,1)-c(x,1)\right|x^{-\beta_2}=0$.
\end{description}

\begin{prop}
\label{prop:limit_theta_p}
Assume that $X$ takes values in $(0,\infty)$ and conditions A(1)-A(3) hold. If $-\frac{1}{\eta}+1+\gamma_1>0$, $\int_0^\infty c\left(x^{-\frac{1}{\gamma_1}},1\right)\,\dd x<\infty$ {and $x\mapsto c(x,1)$ is a continuous function}, then we have
\begin{equation}
\lim_{t\to\infty}\frac{\theta_{1/t}}{t^{-\frac{1}{\eta}+1}U_1(t)}=\int_0^\infty c\left(x^{-\frac{1}{\gamma_1}},1\right)\,\dd x.  \label{eq: limit_theta}
\end{equation}
\end{prop}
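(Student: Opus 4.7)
The plan is to start from the Fubini identity
\[
\theta_{1/t}\,\p\bigl(Y>U_2(t)\bigr)=\int_0^\infty \p\bigl(X>a,\,Y>U_2(t)\bigr)\,\dd a,
\]
valid because $X>0$, combine it with $\p(Y>U_2(t))=1/t$, and rescale via $a=U_1(t)u$ to obtain
\[
\frac{\theta_{1/t}}{t^{-1/\eta+1}U_1(t)}\;=\;t^{1/\eta}\int_0^\infty\p\bigl(X>U_1(t)u,\,Y>U_2(t)\bigr)\,\dd u.
\]
With $h_t(u):=t^{1/\eta}\p\bigl(X>U_1(t)u,Y>U_2(t)\bigr)$, the proposition then reduces to $\int_0^\infty h_t(u)\,\dd u\to\int_0^\infty c(u^{-1/\gamma_1},1)\,\dd u$, which I would establish by dominated convergence.

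For the \emph{pointwise limit}, I fix $u>0$ and $\delta>0$; condition A(1) yields $U_1\bigl(t(u^{1/\gamma_1}-\delta)\bigr)\le U_1(t)u\le U_1\bigl(t(u^{1/\gamma_1}+\delta)\bigr)$ for all large $t$, so monotonicity of joint probabilities gives
\[
c_t\bigl((u^{1/\gamma_1}+\delta)^{-1},1\bigr)\;\le\;h_t(u)\;\le\;c_t\bigl((u^{1/\gamma_1}-\delta)^{-1},1\bigr).
\]
Sending $t\to\infty$ (using the definition of $c$) and then $\delta\to 0$ (continuity of $c(\cdot,1)$) yields $h_t(u)\to c(u^{-1/\gamma_1},1)$. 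For the \emph{dominating function} I split $(0,\infty)$ via $x=u^{-1/\gamma_1}$ into three pieces. On $u\le t^{-\gamma_1}$ (outside the reach of A(3)) the trivial bound $\p(X>U_1(t)u,Y>U_2(t))\le 1/t$ makes the contribution to $\int h_t$ at most $t^{1/\eta-1-\gamma_1}\to 0$, and this is the one place the hypothesis $-1/\eta+1+\gamma_1>0$ enters. On $t^{-\gamma_1}<u<1$ (equivalently $1<x<t$), A(3) supplies the eventual bound $h_t(u)\le c(u^{-1/\gamma_1},1)+Cu^{-\beta_2/\gamma_1}$, the second summand being integrable on $(0,1]$ because $\beta_2<\gamma_1$. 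On $u\ge 1$ (equivalently $x\le 1$), A(2) analogously yields $h_t(u)\le c(u^{-1/\gamma_1},1)+Cu^{-\beta_1/\gamma_1}$, integrable on $[1,\infty)$ because $\beta_1>\gamma_1$. The $c$-part is absorbed by the hypothesis $\int_0^\infty c(x^{-1/\gamma_1},1)\,\dd x<\infty$.

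The main obstacle is gluing the three regimes---A(2) for $x\le 1$, A(3) for $1<x<t$, and the crude marginal bound for $x\ge t$---into one dominating function that holds eventually in $t$. The boundary $x\approx t$ is the delicate spot: A(3) gives nothing there and one is forced back on $\p(\cdot)\le\p(Y>U_2(t))=1/t$, and the extra assumption $-1/\eta+1+\gamma_1>0$ is precisely what makes that crude estimate smaller than the target scale $t^{-1/\eta+1}U_1(t)$.
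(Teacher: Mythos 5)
Your argument is correct and follows essentially the same route as the paper: the same Fubini/rescaling identity, the same A(1)-based sandwich for the pointwise limit, and the same three-region domination (the crude bound $\p(\cdot)\le 1/t$ for $x\ge t$ killed by $-\frac{1}{\eta}+1+\gamma_1>0$, A(3) for $1<x<t$, A(2) for $x\le 1$). The one step to make explicit is that bounding $h_t(u)$ by $c_t$ at a perturbed argument must hold \emph{uniformly} in $u$, not just pointwise; the paper does this with the multiplicative constant $c_1=(d+\epsilon)/(d-\epsilon)$ from A(1), obtaining a $t$-dependent dominating function $g_t(x)=c_t\bigl((x/c_1)^{-1/\gamma_1},1\bigr)$ and closing with the generalized dominated convergence theorem, but this is exactly the sandwich you already set up for the pointwise limit.
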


\begin{re} If $\eta=1$, then the result of \eqref{eq: limit_theta} coincide with the result of Proposition 1 in \citep{Caietal2015}.
\end{re}
Provided with a random sample $(X_1, Y_1),\ldots, (X_n, Y_n)$, we now construct an estimation of $\theta_p$, where $p=p(n)\rightarrow 0$, as $n\rightarrow\infty$. Propositions \ref{prop:limit_theta_p} suggests  the following approximation. With $t$ sufficiently large, 
\[
\theta_p\sim\left(\frac{1}{pt}\right)^{-\frac{1}{\eta}+1}\frac{U_1(1/p)}{U_1(t)}\theta_{\frac{1}{t}}\sim \left(\frac{1}{pt}\right)^{-\frac{1}{\eta}+1+\gamma_1}\theta_{\frac{1}{t}}.
\]
We choose $t=\frac{n}{k}$, where $k=k(n)$ is a sequence of integers such that $k\to\infty$ and $k/n\to 0$, as $n\to \infty$. Then, 
\[
\theta_p\sim \left(\frac{k}{pn}\right)^{-\frac{1}{\eta}+1+\gamma_1}\theta_{\frac{k}{n}}.
\]
From this extrapolation relation, the remaining task is to estimate $\eta$, $\gamma_1$ and $\theta_{\frac{k}{n}}$. There are well-known existing methods for estimating $\gamma_1$ and $\eta$; see Chapters 3 and 7 of \citep{deHaanFerreira2006}. For $\theta_{\frac{k}{n}}$,
we propose a nonparametric estimator given by
\begin{equation}
\hat{\theta}_{k/n}=\frac{1}{k}\sum_{i=1}^n X_i\1_{\{Y_i>Y_{n-k,n}\}}.  \label{eq: hat_thetank}
\end{equation}
Let $\hat\gamma_1$ and $\hat\eta$ denote estimators of $\gamma_1$ and $\eta$, respectively. We construct the following estimator for $\theta_p$:  
\begin{align}
\hat{\theta}_p=\hat{\theta}_{k/n}\left(\frac{k}{np}\right)^{-\frac{1}{\hat{\eta}}+1+\hat{\gamma_1}}. \label{eq: hat_thetap}
\end{align}

Next we prove the asymptotic normality of $\hat{\theta}_p$. The following  conditions will be needed. 
\begin{description}
\item[B(1)] $-\frac{1}{\eta}+1+\gamma_1>0$ and there exists $\bar{\delta}>0$ such that, for $\rho\in\{1,2,2+\bar{\delta}\}$, 
\[
\sup_{y\in[1/2,2]}\int_0^\infty c\left(x,y\right)\,\dd x^{-\rho\gamma_1}<\infty,~ \text{and} \quad \int_1^\infty c(x,y)^2\,\dd x^{-\gamma_1}<\infty.
\]
\item[B(2)] There exists $\beta_1>(2+\bar{\delta})\gamma_1$ and $\tau<0$ such that 
\[
  \sup_{\substack{x\leq1\\1/2\leq y\leq 2}}\left|c_t(x,y)-c(x,y)\right|x^{-\beta_1}=O(t^\tau).
\]
\item[B(3)] There exists $-\tau/(1-\gamma_1)<\beta_2<\gamma_1$ such that 
\[
  \sup_{\substack{1<x<t\\1/2\leq y\leq 2}}\left|c_t(x,y)-c(x,y)\right|x^{-\beta_2}=O(t^\tau)
\]
with the same $\tau$ as in B(2) .
\item[B(4)]  There exists $\rho_1<\frac{1}{2}-\frac{1}{2\eta}$ and a regularly varying function $A_1$ with index $\rho_1$ such that
\[
\sup_{x>1}\left|x^{-\gamma_1}\frac{U_1(tx)}{U_1(t)}-1 \right|=O\left\{ A_1(t)\right\}.
\]
\item[B(5)] As $n\rightarrow \infty$, $k=O(n^\alpha)$ for some $\alpha$ that satisfies the following condition
\[
1-\eta<\alpha<\min\left(1-\frac{\eta}{1+\eta\gamma_1\lambda}, 1+\frac{\eta}{1-2\eta-2\eta\gamma_1\lambda},
\frac{-\frac{1}{\eta}+1+2\tau+2\beta_2(1-\gamma_1)}{-\frac{1}{\eta}+2\tau+2\beta_2(1-\gamma_1)},1+\frac{1}{2\rho_1-1}\right)
\]
with some $\max\left(\beta_2,\frac{1-\eta}{\gamma_1\eta} \right)<\lambda<1$.
\item[B(6)] $\hat \gamma_1$ is such that $\sqrt{k}(\hat\gamma_1-\gamma_1)=O_p(1)$, and $\hat \eta$ is such that $\sqrt{k}(\hat\eta-\eta)=O_p(1)$. 
\end{description}
\begin{theo}
\label{theo:as_normality_theta_p}
Suppose  that $X$ takes values in $(0,\infty)$, {$\gamma_1\in(0,1)$} and Conditions B(1)-B(6) hold. Assume that $d_n=k/(np)\geq 1$ and $\lim_{n\to\infty}(n/k)^{-1/2\eta+1/2}\log (d_n)=0$. Then, as $n\to\infty$,
\[
\left(\frac{n}{k}\right)^{-\frac{1}{2\eta}+\frac{1}{2}}\sqrt{k}\left(\frac{\hat{\theta}_p}{\theta_p}-1 \right)\xrightarrow{d} N(0,\sigma^2)
\]
where
\[
\sigma^2=\left(\int_0^\infty c(x,1)\,\dd x^{-\gamma_1}\right)^{-2}.
\]
\end{theo}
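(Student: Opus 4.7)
The strategy is to decompose the ratio $\hat\theta_p/\theta_p$ into three factors --- a random factor built from $\hat\theta_{k/n}$, a deterministic bias from the extrapolation formula, and an error from plugging in $\hat\gamma_1$ and $\hat\eta$ --- and to argue that after scaling by $(n/k)^{-1/(2\eta)+1/2}\sqrt k$ only the first survives. Writing $d_n = k/(np)$, the exact algebraic identity
\[
\frac{\hat\theta_p}{\theta_p} = \frac{\hat\theta_{k/n}}{\theta_{k/n}}\cdot\frac{\theta_{k/n}\,d_n^{-1/\eta+1+\gamma_1}}{\theta_p}\cdot d_n^{(1/\eta-1/\hat\eta)+(\hat\gamma_1-\gamma_1)} \;=:\; R_n\,B_n\,E_n
\]
reduces the task to analysing $R_n$, $B_n$, $E_n$ separately.

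The exponent factor $E_n$ is easy: by B(6), $\log E_n = \log(d_n)\cdot O_p(1/\sqrt k)$, and the hypothesis $(n/k)^{-1/(2\eta)+1/2}\log d_n\to 0$ makes its contribution to the normalised error $o_p(1)$. The deterministic factor $B_n$ requires a quantitative sharpening of Proposition~\ref{prop:limit_theta_p}: redoing that argument using the second-order rates in B(2)--B(4) (the $O(t^{\tau})$ speed of $c_t\to c$ uniformly for $y\in[1/2,2]$, the bound $A_1$ on the Pareto bias of $U_1$, and the growth exponent $\beta_2$ governing the large-$x$ regime) produces $B_n-1$ as a sum of pieces of orders $(n/k)^{\tau}d_n^{\beta_2(1-\gamma_1)}$, $A_1(n/k)\log d_n$, and similar. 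The intricate upper bounds on $\alpha$ in B(5) are engineered precisely so that each of these, after multiplication by $(n/k)^{-1/(2\eta)+1/2}\sqrt k$, tends to $0$.

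The crux is $R_n$. Split
\[
\hat\theta_{k/n} = T_n + S_n, \qquad T_n := \frac{1}{k}\sum_{i=1}^n X_i\1_{\{Y_i>U_2(n/k)\}},
\]
so that $S_n$ accounts for replacing the quantile $U_2(n/k)$ by the order statistic $Y_{n-k,n}$. The summands of $T_n$ are i.i.d.\ with mean $(k/n)\theta_{k/n}$, so $\E[T_n]=\theta_{k/n}$. A Karamata-type computation in the spirit of the proof of Proposition~\ref{prop:limit_theta_p}, using A(1) together with \eqref{eq: asymind} to identify the dominant contribution to $\E[X_i^2\1_{\{Y_i>U_2(n/k)\}}]$, should give
\[
\operatorname{Var}(T_n) \;=\; \frac{(n/k)^{1/\eta-1}\theta_{k/n}^2\,\sigma^2}{k}\bigl(1+o(1)\bigr),
\]
with $\sigma^2$ exactly as in the statement, and the Lyapunov condition at order $2+\bar\delta$ is checked using the third integrability statement in B(1). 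This yields the CLT for $T_n$. For $S_n$, the classical $\sqrt k\bigl(nF_2(Y_{n-k,n})/k-1\bigr)=O_p(1)$ combined with the uniform-in-$y$ joint-tail bounds in B(2)--B(3) lets one show $|S_n|/\theta_{k/n}=o_p\bigl((n/k)^{1/(2\eta)-1/2}/\sqrt k\bigr)$. Combining the three factors yields the stated asymptotic normality.

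The main obstacle is the variance computation for $T_n$ together with the accompanying Lyapunov verification. Because $X$ is heavy-tailed, the integral $\E[X^2\1_{\{Y>U_2(n/k)\}}]$ must be split at $U_1(n/k)$: the contribution from $\{X>U_1(n/k)\}$ is handled by the joint-tail asymptotics \eqref{eq: asymind} uniformly controlled through B(2) (the range $x\le 1$), while the contribution from $\{X\le U_1(n/k)\}$ is controlled through B(3) (the range $1<x<t$). The uniform-in-$y$ formulations of B(1)--B(3) are what allow one to propagate these estimates through to the random threshold $Y_{n-k,n}$ in $S_n$, and condition B(5) is what balances the stochastic rate of $R_n$ against the deterministic bias in $B_n$.
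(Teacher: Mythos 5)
Your decomposition $\hat\theta_p/\theta_p=R_nB_nE_n$ is exactly the paper's $I_1I_2I_3I_4$ (with $E_n=I_1I_2$, $B_n=I_4$, $R_n=I_3$), and your treatment of $E_n$ and of the deterministic bias $B_n$ matches the paper's. The gap is in $R_n$, and it sits precisely where the paper spends almost all of its effort. Writing $e_n=\frac nk\left(1-F_2(Y_{n-k,n})\right)$, your correction term is $S_n=e_n\tilde\theta_{ke_n/n}-\tilde\theta_{k/n}$, i.e.\ the increment of the weighted tail empirical process $y\mapsto\frac{1}{ky}\sum_iX_i\1_{\{Y_i>U_2(n/(ky))\}}$ between the deterministic point $y=1$ and the data-dependent point $y=e_n$. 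The ingredients you cite --- $\sqrt k(e_n-1)=O_p(1)$ together with the uniform-in-$y$ bias bounds B(2)--B(3) --- control only the deterministic centering difference $e_n\theta_{ke_n/n}-\theta_{k/n}$ (the paper's term $T_2$, which dies because $(n/k)^{-1/(2\eta)+1/2}\sqrt k\,|e_n-1|\to0$). They say nothing about the stochastic oscillation of the centered process over the random location $e_n$: since $Y_{n-k,n}$ is built from the same sample you cannot condition it away, and a pointwise CLT at $y=1$ does not bound the supremum of the increment over $|y-1|\le M/\sqrt k$. The paper supplies exactly this missing control by proving a functional CLT for the whole process on $[1/2,2]$ (Proposition \ref{prop:limit_process}, via the bracketing CLT, Theorem 2.11.9 of \citep{VW96}); the asymptotic-equicontinuity/entropy computation there --- with its two competing bounds on the increment second moments and the $(2+\bar\delta)$-moment from B(1) --- is the technical heart of the proof and is absent from your sketch.

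A salvage of your route that avoids the functional CLT does exist: since $X_i>0$, the map $y\mapsto\sum_iX_i\1_{\{Y_i>U_2(n/(ky))\}}$ is monotone, so on the event $\{|e_n-1|\le M/\sqrt k\}$ the oscillation is sandwiched by its values at the two deterministic endpoints $1\pm M/\sqrt k$, each of which can be bounded in first moment by $O(\theta_{k/n}/\sqrt k)=o\bigl(\theta_{k/n}(n/k)^{1/(2\eta)-1/2}/\sqrt k\bigr)$ using Lemma \ref{le:properties}--type computations; but some such argument must actually be given --- as written, ``lets one show $|S_n|/\theta_{k/n}=o_p(\cdot)$'' is an assertion, not a proof. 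Separately, your claimed $\operatorname{Var}(T_n)$ with ``$\sigma^2$ exactly as in the statement'' is not what the second-moment computation yields: one gets $n\E[Z_{n,1}(1)^2]\to-\int_0^\infty c(x,1)\,\dd x^{-2\gamma_1}$, so the normalized $R_n-1$ has limiting variance $-\int_0^\infty c(x,1)\,\dd x^{-2\gamma_1}\bigl(\int_0^\infty c(x,1)\,\dd x^{-\gamma_1}\bigr)^{-2}$, which carries an extra factor relative to the $\sigma^2$ displayed in the theorem; you should compute and carry this factor rather than assert the answer.
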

{
\begin{re}
Note that the condition $\lim_{n\to\infty}(n/k)^{-1/2\eta+1/2}\log (d_n)=0$ implies that $\eta<1$. Moreover, from $\gamma_1<1$ and $-\frac{1}{\eta}+1+\gamma_1>0$ (see B(1)), it follows that $\eta>1/2$.
\end{re}
}

\section{Simulation Study} \label{sc: simulation}
In this section, we study the finite sample performance of our method. We apply our estimator given by \eqref{eq: hat_thetap} to data simulated from the following two distributions. We consider sample size $n=5000$, and $p=10/n, 1/n$ and $1/(10n)$.

Let $Z_1,\,Z_2$, and $Z_3$ be independent Pareto random variables with parameters 0.3, 0.4, and 0.4 


{\it Example 1.}  Define $X=Z_1\vee Z_2$ and $Y=Z_1\vee Z_3$.

{\it Example 2.}  Define
$
(X, Y)=B(Z_1, Z_1)+(1-B)(X_1, X_3),
$
where $B$ is a Bernoulli $(1/2)$ random variable independent of $Z_1,\,Z_2$, and $Z_3$.

For both distributions, $\gamma_1=0.4$, $\eta=\frac{3}{4}$ and $c(x,y)=d(x\wedge y)^{1/\eta}$, $0<x, y<\infty$, for some constant $d>0$.
And assumptions B(1)-B(7) are satisfied by both models with properly chosen $\bar{\delta}$, $\beta_1$, $\beta_2$,  $\tau$ and $\alpha$. To complete our estimator given by \eqref{eq: hat_thetap}, we use the Hill estimator for $\gamma_1$ and an estimator for $\eta$ proposed in \citep{DDFH04}. Let $k_1$ and $k_2$ be two intermediate sequences.  Define
\[
\hat{\gamma}_1=\frac{1}{k_1}\sum_{i=1}^{k_1}\log(X_{n-i+1,n})-\log(X_{n-k_1,n}),
\]
and
\begin{equation}
\hat{\eta}=\frac{1}{k_2}\sum_{i=1}^{k_2}\log\frac{T^{(n)}_{n,n-i+1}}{T^{(n)}_{n,n-k_2}},  \label{eq: hat_eta}
\end{equation}
where $ T^{(n)}_i=\frac{n+1}{n+1-R^X_i}\wedge\frac{n+1}{n+1-R^Y_i}$ with $R_i^X$  and $R_i^Y$ denoting the ranks of $X_i$ and $Y_i$ in their respective samples. 

For $p=10/n$, we compare our estimator with the nonparametric estimator, namely,
\[
\hat{\theta}_{emp}=\frac{1}{10}\sum_{i=1}^n X_i\1_{\{Y_i>Y_{n-10,n}\}},
\]
which is obtained by letting $k/n=p$ in \eqref{eq: hat_thetank}.

For each estimator, we compute the relative error defined as
$bias_p=\frac{1}{m}\sum_{i=1}^m \frac{\hat \theta_{p,i}}{\theta_p}-1$, where $\hat \theta_{p, i}$ is an estimate based on the $i$-th sample. A relative error for $\hat \theta_{emp}$ is computed in the same way, denoted as $bias_{emp}$.  Figure \ref{Fig: boxplot} shows the relative errors obtained by generating $m=500$ samples for each scenario. From the boxplots, for the situation where the empirical estimator is applicable, that is $p=10/p$, our estimator has a slightly larger bias but a smaller variance. As $p$ becomes smaller, the empirical estimator is not applicable, yet our estimator still has decent performance with growing variance. 

\begin{figure}
\begin{center}
\captionsetup{format=hang}
\includegraphics[width=\textwidth]{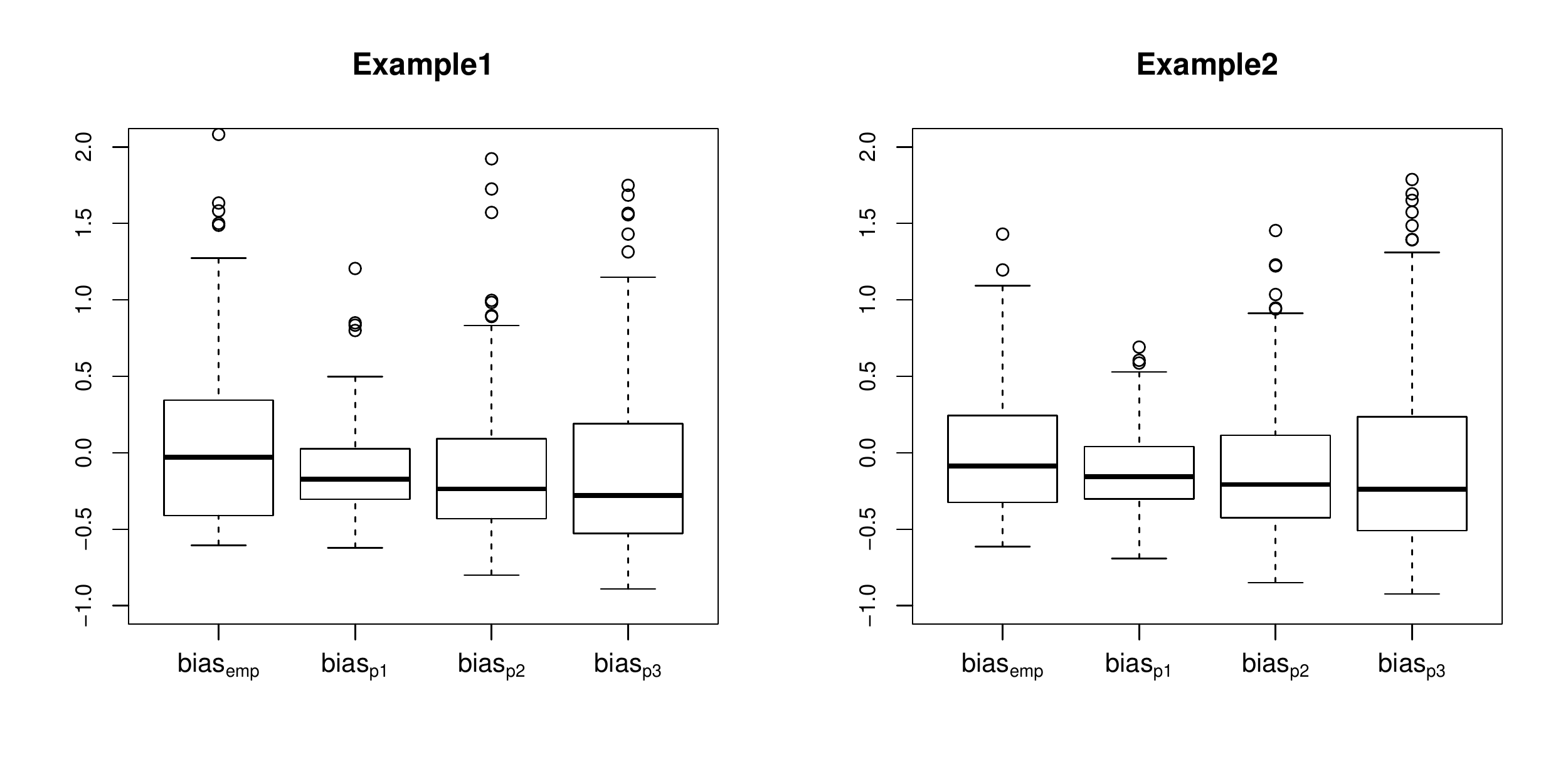}
\caption{The relative errors of the estimators with $n=5000$, $p1=10/n$, $p2=1/n$, $p3=1/(10n)$, and $k=k_1=k_2=200$, for both examples.}
\label{Fig: boxplot}
\end{center}
\end{figure}

\section{Application} \label{sc: application}
We apply our estimation to daily precipitation data from two weather stations in the Netherlands, namely Cabauw and Rotterdam. The distance between these two stations is about 32 km. The station Cabauw is close to the Lek river while the station Rotterdam is close to the river Nieuwe Maas, which is the continuation of Lek.  Heavy rainfall at both stations might lead to a severe flood in this region. Thus, the expected amount of rainfall in Cabauw given a heavy rainfall in Rotterdam is an important risk measure for the hydrology safety control.  We estimate this quantity based on the data from August 1st, 1990 to December 31, 2016. After removing the missing values, there are in total 9605 observations.  There is open access to the data at \url{http://projects.knmi.nl/klimatologie/uurgegevens/selectie.cgi}.   

Let $X$ be the daily rainfall at Cabauw and $Y$ be the daily rainfall at Rotterdam. Before applying our estimation, we shall look at the sign of the extreme value index of $X$ and the extremal dependence of $X$ and $Y$. From the Hill estimates of $\gamma$ as shown in  right panel of Figure \ref{Fig: evi_eta_cab_ro}, we conclude that $\gamma>0$, which is in line with the existing literature.  For instance,  \citep{Buishandetal2008} obtains 0.1082 as the estimate of $\gamma$ for the daily rainfall in the Netherlands   and  \citep{ColesTawn1996} reports 0.066 as the estimate of $\gamma$ for the daily rainfall in the southwest of England.

\begin{figure}
\begin{center}
\captionsetup{format=hang}
\includegraphics[scale=0.4]{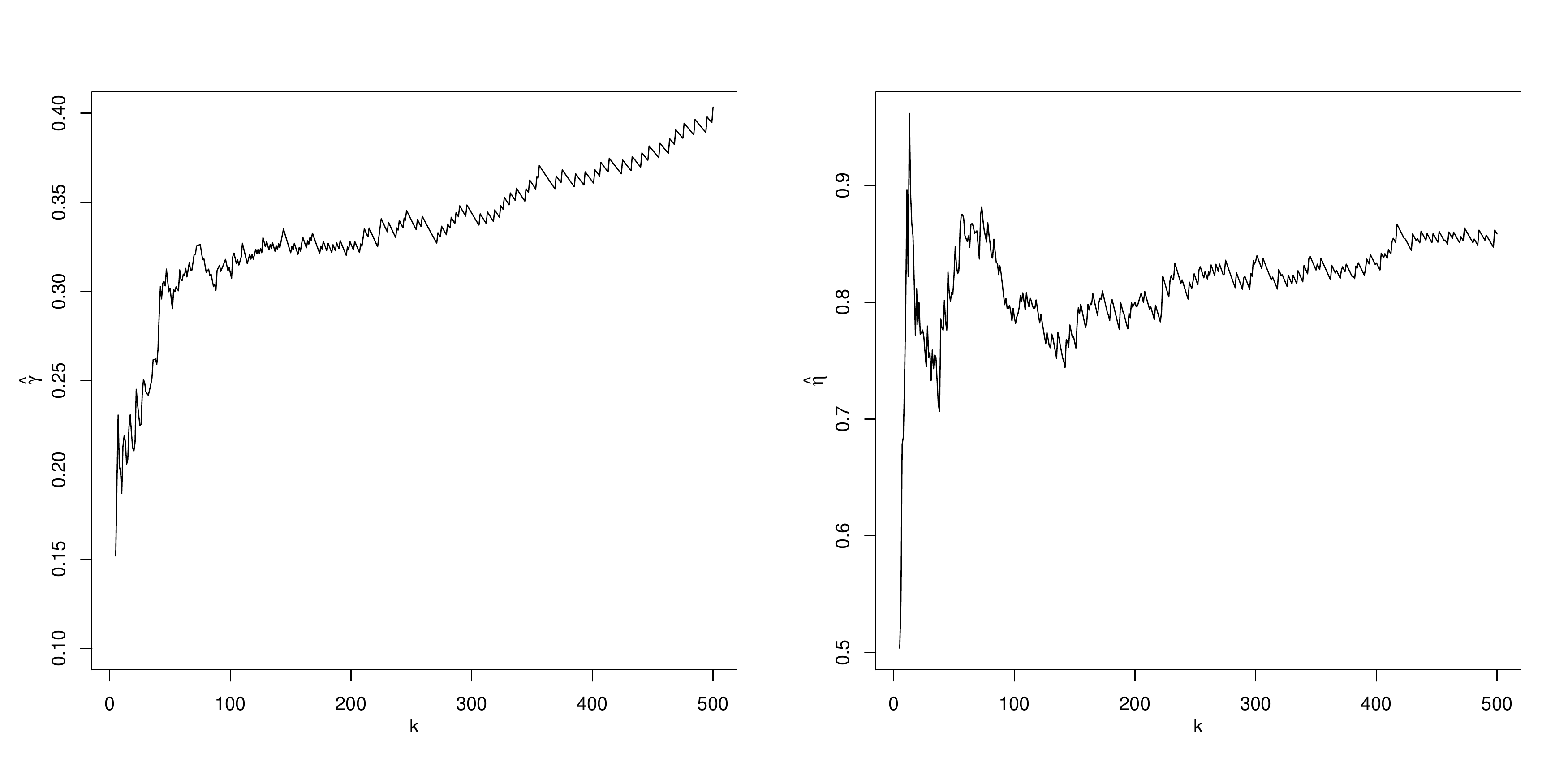}
\caption{The Hill estimates of $\gamma_1$ for the daily precipitation at Cabauw (left panel) and the estimates of $\eta$ for the daily precipitations at Cabauw and Rotterdam (right panel).}
\label{Fig: evi_eta_cab_ro}
\end{center}
\end{figure}

 Next, we compute the Hill estimator of $\eta$ given by \eqref{eq: hat_eta}. The estimates are above 0.5 as shown in the right panel of Figure \ref{Fig: evi_eta_cab_ro}.

Finally, we apply our estimator to answer the following question. Provided that the amount of rainfall in Rotterdam exceeds the  $M$-year return level, what is the expected amount of rainfall in Cabauw, respectively, for $M=50$ and 100? Let $R_M$ denote the $M$-year return level. \citep{Coles2001} gives the definition of $R_M$ as the level expected to be exceeded once every $M$ years.  As we consider the daily precipitation, $R_M=U_2(365M)$. 

 Choosing $k_1=k_2=200$, we obtain the following estimates of $\gamma$ and $\eta$: $\hat \gamma_1=0.326$ and $\hat\eta=0.835$. 
Figure \ref{Fig: theta} plots the estimates of $\theta_p$ against $k$, from which we conclude $k=50$ lying in the interval where the estimates are stable. We thus report the following estimates of $\hat \theta_{p}$: 41.6 mm for $M=50$ and 45.5 mm for $M=100$.

\begin{figure}
\begin{center}
\captionsetup{format=hang}
\includegraphics[scale=0.6]{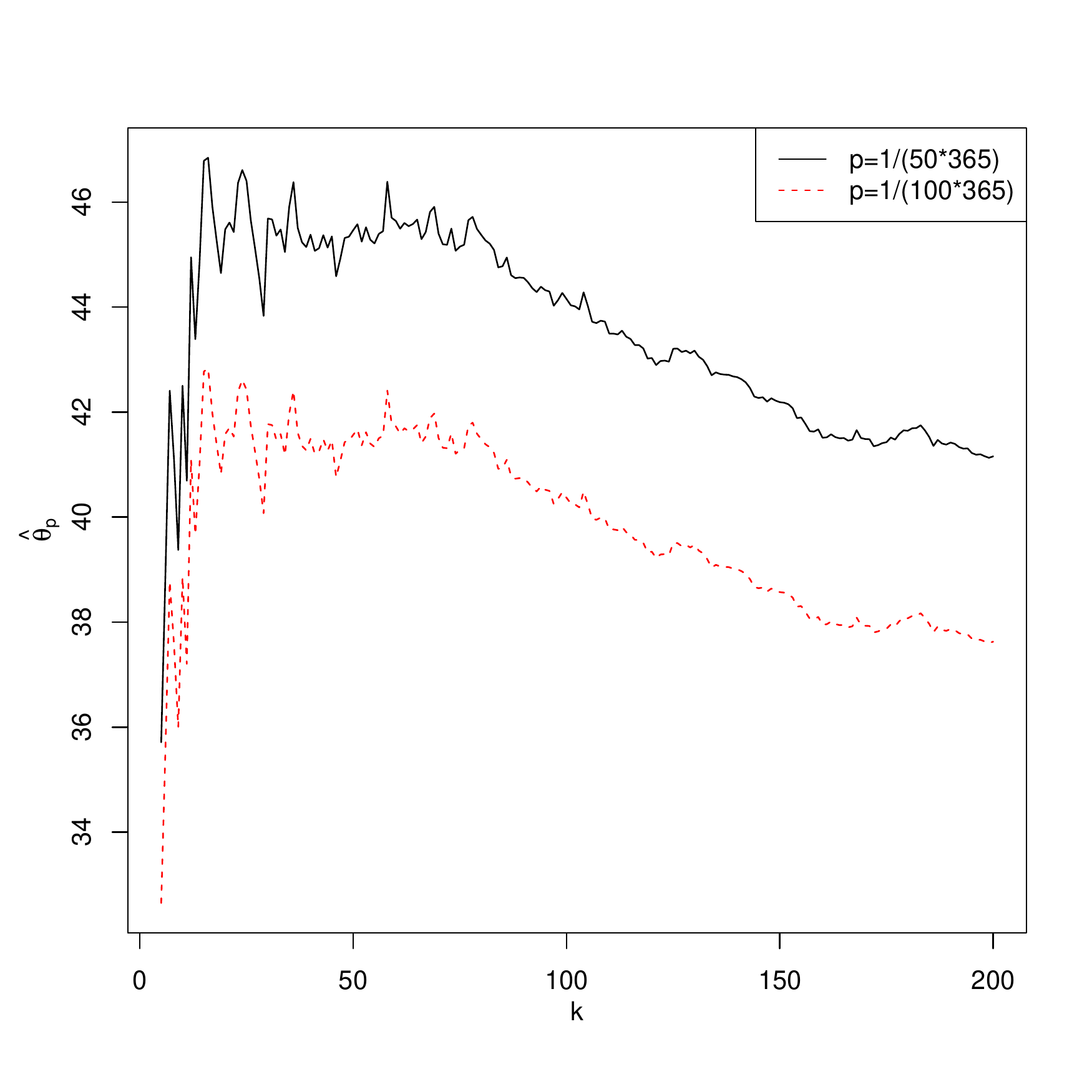}
\caption{The estimates of the conditional expected rainfall at Cabauw given that the rainfall at Rotterdam exceed the 50 (red) and 100 (black) year return level. }
\label{Fig: theta}
\end{center}
\end{figure}


\section{Proofs} \label{sc: proof}
\begin{proof}[Proof of Proposition \ref{prop:limit_theta_p}]

We recall that for any positive random variable $Z$,  the expectation can be written as
\begin{equation*}
\label{eqn:expectation}
\E[Z]=\int_0^\infty \p(Z>x)\,\dd x.
\end{equation*}
Then by definition of $\theta_p$ and a change of variable, we have
\begin{equation}
\label{eq: thetacn}
\begin{split}
\theta_{1/t}&=\int_0^\infty t\p(X>x, Y>U_2(t))\,\dd x\\
&=t^{-\frac{1}{\eta}+1}U_1(t)\int_0^\infty t^{\frac{1}{\eta}}\p(X>xU_1(t), Y>U_2(t))\,\dd x.
\end{split}
\end{equation}
Define $f_t(x):=t^{\frac{1}{\eta}}\p(X>xU_1(t), Y>U_2(t))$, $x>0$. Then
\begin{equation*}
\frac{\theta_{1/t}}{t^{-\frac{1}{\eta}+1}U_1(t)}=\int_0^\infty f_t(x)\,\dd x. 
\end{equation*}
For any fixed $x$, {by~\eqref{eq: asymind} and the continuity of the function $x\mapsto c(x,1)$, we have }
\begin{equation*}
\lim_{t\rightarrow\infty}f_t(x)=c\left(x^{-\frac{1}{\gamma_1}},1\right).  
\end{equation*}
We shall apply  the generalized dominated convergence theorem to validate that $\lim_{t\rightarrow\infty}\int_0^\infty f_t(x)\,\dd x =\int_0^\infty c\left(x^{-\frac{1}{\gamma_1}},1\right)\,\dd x. $

By assumption A(1), for any $\epsilon>0$, there exists $t_0$ such that 
\[
\left|\frac{U_1(t)}{t^{\gamma_1}}-d\right|<\epsilon,\qquad\text{for all }t>t_0.
\]
Hence, for $c_1=(d+\epsilon)/(d-\epsilon)$ and $x>c_1(t_0/t)^{\gamma_1}$, we get
\[
\frac{U_1(t)x}{U_1\left(t(x/c_1)^{\frac{1}{\gamma_1}}\right)}=\frac{U_1(t)/t^{\gamma_1}}{U_1\left(t(x/c_1)^{\frac{1}{\gamma_1}}\right)/(t^{\gamma_1}x/c_1)}c_1
>\frac{d-\epsilon}{d+\epsilon}c_1=1.
\]
Consequently, for $x>c_1(t_0/t)^{\gamma_1}$, 
\begin{equation*}
f_t(x)\leq t^{\frac{1}{\eta}}\p\left(X>U_1\left(t(x/c_1)^{\frac{1}{\gamma_1}}\right), Y>U_2(t)\right)=c_t\left((x/c_1)^{-\frac{1}{\gamma_1}},1\right).
\end{equation*}
On the other hand, for $0<x\leq c_1(t_0/t)^{\gamma_1}$, $f_t(x)\leq t^{\frac{1}{\eta}-1}$. Define
\[ g_t(x): = \left\{ \begin{array}{ll}
        c_t\left((x/c_1)^{-\frac{1}{\gamma_1}},1\right) & \mbox{if $x > c_1(t_0/t)^{\gamma_1}$};\\
        t^{\frac{1}{\eta}-1} & \mbox{\text{otherwise}}.\end{array} \right. \] 
Then $f_t(x)\leq g_t(x)$. By generalized dominated convergence theorem, it is then sufficient to prove that 
\[
\lim_{t\rightarrow\infty}\int_0^\infty g_t(x)\,\dd x =\int_0^\infty \lim_{t\rightarrow\infty} g_t(x)\,\dd x=\int_0^\infty c\left((x/c_1)^{-\frac{1}{\gamma_1}},1\right)\,\dd x. 
\]
Observe that 
\begin{align*}
\int_0^\infty g_t(x)\,\dd x=&\int_0^{c_1(t_0/t)^{\gamma_1}}  t^{\frac{1}{\eta}-1}\,\dd x+c_1\int_{(t_0/t)^{\gamma_1}}^{\infty} c_t\left(x^{-\frac{1}{\gamma_1}},1\right)\,\dd x \\
=&c_1t^{\gamma_1}_0t^{\frac{1}{\eta}-1-\gamma_1}+{c_1\int_{(t_0/t)^{\gamma_1}}^{\infty} c_t\left(x^{-\frac{1}{\gamma_1}},1\right)\,\dd x}\\
\to &0+{c_1 \int_0^\infty c\left(x^{-\frac{1}{\gamma_1}},1\right)\,\dd x,}
\end{align*}
as $t\rightarrow \infty$. The last convergence follows from that $\frac{1}{\eta}-1-\gamma_1<0$, {$\int_0^{(t_0/t)^{\gamma_1}} c\left(x^{-\frac{1}{\gamma_1}},1\right)\,\dd x\to 0$,}
and the fact that
\begin{align*}
&\left|\int_{{(t_0/t)^{\gamma_1}}}^{\infty} c_t\left(x^{-\frac{1}{\gamma_1}},1\right)\,\dd x -\int_{{(t_0/t)^{\gamma_1}}}^{\infty} c\left(x^{-\frac{1}{\gamma_1}},1\right)\,\dd x\right|\\
\leq& \int_{{(t_0/t)^{\gamma_1}}}^1 \left|c_t\left(x^{-\frac{1}{\gamma_1}},1\right)-c\left(x^{-\frac{1}{\gamma_1}},1\right)\right|\,\dd x+\int_1^\infty \left|c_t\left(x^{-\frac{1}{\gamma_1}},1\right)-c\left(x^{-\frac{1}{\gamma_1}},1\right)\right|\,\dd x\\
=&o(1)\int_{{(t_0/t)^{\gamma_1}}}^1 x^{-\beta_2/\gamma_1}\,\dd x +o(1)\int_1^\infty x^{-\beta_1/\gamma_1}\,\dd x\to 0,
\end{align*}
by Assumptions A(2) and A(3).

\end{proof}

Through out the proof section, we denote the convergence speed in Theorem \ref{theo:as_normality_theta_p} by
\begin{equation}
T_n=\sqrt{k}\left(\frac{n}{k}\right)^{-\frac{1}{2\eta}+\frac{1}{2}}.  \label{eq: def_Tn}
\end{equation}
From Assumption B(5), $T_n\to \infty$, as ${n\rightarrow \infty}$. By construction, the asymptotic normality of $\hat{\theta_p}$  depends on the asymptotic behaviour of $\hat{\theta}_{k/n}$, which is given in Proposition \ref{prop:as.distr_theta}.

\begin{prop}
\label{prop:as.distr_theta}
Under the assumptions of Theorem \ref{theo:as_normality_theta_p}, it holds
\[
\frac{T_n\left(\frac{n}{k}\right)^{\frac{1}{\eta}-1}}{U_1(n/k)}\left(\hat{\theta}_{\frac{k}{n}}-\theta_{\frac{k}{n}} \right)\xrightarrow{d} N(0, \sigma_1^2),
\]
where $\sigma_1^2=-\int_0^\infty c(x,1)\,\dd x^{-2\gamma_1}$.
\end{prop}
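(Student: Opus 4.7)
The plan is to express $\hat\theta_{k/n}$ in layer--cake integral form, reduce it to a centred i.i.d.\ sum plus an empirical--quantile correction, and obtain the Gaussian limit from a Lyapunov CLT applied to the i.i.d.\ sum; the second--order and range conditions in B(1)--B(5) then absorb the approximation errors.

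Concretely, using $X = U_1(n/k)\int_0^\infty \1\{X > x U_1(n/k)\}\,\dd x$ together with Fubini, I would write, with $u_n := U_1(n/k)(n/k)^{1-1/\eta}$,
\[
\frac{\hat\theta_{k/n}}{u_n} = \int_0^\infty (n/k)^{1/\eta-1}\,\frac{1}{k}\sum_{i=1}^n \1\{X_i > xU_1(n/k),\,Y_i > Y_{n-k,n}\}\,\dd x,
\]
so that $\theta_{k/n}/u_n \to \int_0^\infty c(x^{-1/\gamma_1},1)\,\dd x$ by Proposition~\ref{prop:limit_theta_p}. I would then split
\[
\hat\theta_{k/n}-\theta_{k/n} \;=\; (\hat\theta_{k/n}-\tilde\theta_{k/n}) + (\tilde\theta_{k/n}-\theta_{k/n}),\qquad \tilde\theta_{k/n} := \tfrac{1}{k}\sum_i X_i\1\{Y_i > U_2(n/k)\},
\]
isolating a population term and an empirical--quantile term. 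For the population term the summands are i.i.d.\ with mean $(k/n)\theta_{k/n}$; the change of variable $x = y^{-1/\gamma_1}$ together with condition B(1) for $\rho = 2$ and the regular--variation bound B(4) yields
\[
\mathrm{Var}\bigl(X\1\{Y>U_2(n/k)\}\bigr) \;\sim\; U_1(n/k)^2\,(n/k)^{-1/\eta}\,\sigma_1^2,
\]
while B(1) for $\rho = 2+\bar\delta$ supplies a Lyapunov $(2+\bar\delta)$--moment bound. The Lyapunov CLT then delivers $T_n(\tilde\theta_{k/n}-\theta_{k/n})/u_n \xrightarrow{d} N(0,\sigma_1^2)$.

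The main obstacle is showing that the empirical--quantile correction $\hat\theta_{k/n}-\tilde\theta_{k/n}$ is $o_p(u_n/T_n)$. My plan is to write this difference as an integral (over $x$) of the indexed empirical process $(x,y)\mapsto \frac{1}{k}\sum_i \1\{X_i > xU_1(n/k),\,Y_i > U_2((n/k)/y)\}$ evaluated between $y=1$ and $y = \hat R_n := (n/k)(1 - F_2(Y_{n-k,n}))$, where standard order--statistics arguments give $\hat R_n - 1 = O_p(k^{-1/2})$. Restricting to $y \in [1/2,2]$ is then justified with high probability, and the uniform second--order bounds B(2)--B(3) control the mean of the process on the resulting narrow strip whose joint--tail mass is of the right smallness. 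The rate restrictions in B(5) are engineered precisely so that the biases coming from the $U_1$ approximation (via B(4)), the second--order tail bounds B(2)--B(3), and the quantile substitution all vanish faster than $T_n^{-1}$; juggling these inequalities simultaneously is the principal technical burden of the argument, after which Slutsky combines the two pieces into the stated limit.
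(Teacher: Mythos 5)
Your route is, at bottom, the same as the paper's: write $\hat\theta_{k/n}$ in layer-cake form, peel off a fixed-threshold i.i.d.\ sum $\tilde\theta_{k/n}=\frac1k\sum_i X_i\1_{\{Y_i>U_2(n/k)\}}$ that carries the Gaussian limit (your variance computation $\mathrm{Var}(X\1_{\{Y>U_2(n/k)\}})\sim U_1(n/k)^2(n/k)^{-1/\eta}\sigma_1^2$ and the Lyapunov moment from B(1) with $\rho=2+\bar\delta$ match the paper's Lemma \ref{le:A_n} and step (d) of the proof of Proposition \ref{prop:limit_process}), and then argue that the random-threshold correction and the deterministic bias are $o_p$ of the target rate. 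The paper organizes this differently but equivalently: it sets $e_n=\frac nk(1-F_2(Y_{n-k,n}))$, writes $\hat\theta_{k/n}=e_n\tilde\theta_{ke_n/n}$, proves weak convergence of the whole process $y\mapsto \frac{S_n}{U_1(n/k)}(\tilde\theta_{ky/n}-\theta_{ky/n})$ in $D([1/2,2])$, and absorbs the random index by continuity, with the bias term $e_n\theta_{ke_n/n}-\theta_{k/n}$ handled through the Lipschitz property of $y\mapsto\int_0^\infty c(x,y)\,\dd x^{-\gamma_1}$ and $T_n|e_n-1|=O_P(T_n/\sqrt k)=o_P(1)$.

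The genuine gap is in your treatment of the correction $\hat\theta_{k/n}-\tilde\theta_{k/n}$. You propose that the uniform second-order bounds B(2)--B(3) control \emph{the mean} of the indexed process on the narrow strip between $y=1$ and $y=\hat R_n$. Bounding the mean only disposes of the bias part; it says nothing about the centred fluctuation of $\frac1k\sum_i X_i\1_{\{Y_{n-k,n}\wedge U_2(n/k)<Y_i\le Y_{n-k,n}\vee U_2(n/k)\}}$ around its expectation, and since the strip endpoints are random (determined by $Y_{n-k,n}$) you cannot simply condition and apply Chebyshev at a fixed $y$. What is needed is a bound on $\sup_{|y-1|\le \delta_n}$ of the centred process, i.e.\ asymptotic equicontinuity. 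That is precisely the content of steps (a)--(c) in the paper's proof of Proposition \ref{prop:limit_process}: a truncated $(2+\bar\delta)$-moment bound, a modulus-of-continuity estimate, and a bracketing-entropy computation feeding Theorem 2.11.9 of van der Vaart and Wellner, and it is where B(2), B(3), B(5) and $\bar\delta$ do their real work. The heuristic is sound --- on a strip of relative width $\delta$ the fluctuation is of order $\delta^{1/2}$ times the target rate $U_1(n/k)(n/k)^{1-1/\eta}/T_n$, and $\delta=O_p(k^{-1/2})$ --- but without the uniform-in-$y$ maximal inequality the negligibility claim is asserted rather than proved, and that inequality is the technical heart of the whole proposition.
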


The proof of Proposition \ref{prop:as.distr_theta} is postponed to the Appendix.

\begin{proof}[Proof of Theorem \ref{theo:as_normality_theta_p}]
Recall that $d_n=\frac{k}{np}$. By the definition of $\hat{\theta_p}$, we make the following decomposition
\[
\begin{split}
\frac{\hat{\theta_p}}{\theta_p}
=\frac{d_n^{-\frac{1}{\hat{\eta}}+1+\hat{\gamma_1}}\hat{\theta}_{k/n}}{\theta_p}
&=d_n^{\hat{\gamma}_1-\gamma_1}d_n^{\frac{1}{\eta}-\frac{1}{\hat{\eta}}}\frac{\hat{\theta}_{k/n}}{\theta_{k/n}}\frac{d_n^{-\frac{1}{\eta}+1+\gamma_1}\theta_{k/n}}{\theta_p}\\
&=:I_1\cdot I_2\cdot I_3\cdot I_4.
\end{split}
\]
We shall show that these four terms all converges to unity at certain rates. First,  from the assumption that $\sqrt{k}(\hat{\gamma_1}-\gamma_1)=O_P(1)$, it follows that 
\[
\begin{split}
I_1-1&=e^{(\hat{\gamma_1}-\gamma_1)\log d_n}-1=(\hat{\gamma_1}-\gamma_1)\log d_n+o\left((\hat{\gamma_1}-\gamma_1)\log d_n \right)
=O_P\left(\frac{\log d_n}{\sqrt{k}} \right)=o_p\left(\frac{1}{T_n} \right).
\end{split}
\]
 In the last equality,  we used  the assumption that $\lim_{n\to\infty}(n/k)^{-1/2\eta+1/2}\log d_n=0$. Recall that $T_n$ is defined in \eqref{eq: def_Tn}.

In the same way, we get $I_2-1=o_p\left(\frac{1}{T_n} \right)$.

Combining Propositions \ref{prop:as.distr_theta} and \ref{prop:limit_theta_p}, we derive that 
\begin{align*}
T_n(I_3-1)=\frac{T_n}{{\theta}_{k/n}}\left(\hat{\theta}_{k/n}-{\theta}_{k/n}\right)
=&\frac{T_n\left(\frac{n}{k}\right)^{\frac{1}{\eta}-1}}{U_1(n/k)}\left(\hat{\theta}_{k/n}-{\theta}_{k/n}\right)
\cdot\frac{U_1(n/k)\left(\frac{n}{k}\right)^{-\frac{1}{\eta}+1}}{{\theta}_{k/n}}\\
\overset{P}{\to}& \left(\int_0^\infty c\left(x^{-\frac{1}{\gamma_1}},1\right)\,\dd x \right)^{-1} N(0, \sigma_1^2).
\end{align*}
That is,  $T_n(I_3-1) \overset{P}{\to}\Gamma_1$,
where $\Gamma_1$ is a normal distribution with mean zero and variance, $\sigma^2=-\int_0^\infty c(x,1)\,\dd x^{-2\gamma_1}\left(\int_0^\infty c\left(x^{-\frac{1}{\gamma_1}},1\right)\,\dd x \right)^{-2}$, which is the limit distribution in Theorem  \ref{theo:as_normality_theta_p}.

 Then we deal with the last term, $I_4$. Here we need a  rate for the convergence in Proposition \ref{prop:limit_theta_p}. Continuing with \eqref{eq: thetacn},
\begin{equation}
\label{eq: thetank}
\begin{split}
\frac{\theta_{k/n}}{U_1(n/k)\left(n/k \right)^{-\frac{1}{\eta}+1}}
=&\int_0^\infty \left(\frac{n}{k}\right)^{\frac{1}{\eta}}\p(X>xU_1(n/k), Y>U_2(n/k))\,\dd x\\
=&-\int_0^\infty  c_{\frac{n}{k}}(s_n(x), 1) \,\dd x^{-\gamma_1}, 
\end{split}
\end{equation}
with 
\begin{equation}
\label{eq: sn}
s_n(x)=\frac{n}{k}\left[1-F_1\left(U_1(n/k)x^{-\gamma_1}\right)\right].
\end{equation}
 By the regular variation of $1-F_1$, we have $\lim_{n\rightarrow\infty}s_n(x)=x$, for any $x>0$. By  Lemma \ref{le:properties} (iii) and (v) in the Appendix, we have that 
\begin{equation}
\label{eq: diffcnc}
\int_0^\infty  c_{\frac{n}{k}}(s_n(x), 1) \,\dd x^{-\gamma_1}=\int_0^\infty c(x,1)\,\dd x^{-\gamma_1}+o\left(\frac{1}{T_n}\right).
\end{equation}
It follows from Assumptions B(4) and B(5) that
\[
\begin{split}
\frac{U_1(1/p)}{U_1(n/k)d_n^{\gamma_1}}-1=O(A(n/k))=o\left(\frac{1}{\sqrt{k}} \right).
\end{split}
\]
Combing this result with \eqref{eq: thetank} and \eqref{eq: diffcnc} leads to
\[
\begin{split}
I_4&=\frac{\theta_{k/n}}{U_1(n/k)\left(n/k \right)^{-\frac{1}{\eta}+1}}\frac{U_1(1/p)\left(1/p \right)^{-\frac{1}{\eta}+1}}{\theta_p}\frac{U_1(n/k)}{U_1(1/p)}\left(\frac{k}{np}\right)^{\gamma_1}\\
&=\left(\int_0^\infty c(x,1)\,\dd x^{-\gamma_1}+o\left(\frac{1}{T_n}\right)\right)\left(\int_0^\infty c(x,1)\,\dd x^{-\gamma_1}+o\left(\frac{1}{T_n}  \right)\right)^{-1}\left( 1+o\left(\frac{1}{\sqrt{k}} \right)\right)^{-1}\\
&=1+o\left(\frac{1}{T_n}\right).
\end{split}
\]

Thus, we obtain
\[
\begin{split}
\frac{\hat{\theta}_p}{\theta_p}-1&=I_1I_2I_3I_4-1\\
&=\left[1+o_p\left(\frac{1}{T_n}\right)  \right]^2 \left[1+\frac{\Gamma_1}{T_n}+o_p\left(\frac{1}{T_n}\right) \right] \left[1+o\left(\frac{1}{T_n}\right) \right] -1\\
&=\frac{\Gamma_1}{T_n}+o_p\left(\frac{1}{T_n}\right) .
\end{split}
\]
The proof is completed.

\end{proof}

\section{Appendix: Proof of Proposition \ref{prop:as.distr_theta}}
 In this section, all the limit processes involved are defined in the same probability space via the Skorohod construction, i.e. they are only equal in distribution to the original processes.  If we define 
\[
e_n=\frac{n}{k}\left(1-F_2(Y_{n-k,n})\right),
\]
 we have
\[
\hat{\theta}_{k/n}=\frac{1}{k}\sum_{i=1}^n X_i\1_{\{Y_i>U_2(n/ke_n)\}}.
\]
Note that $e_n\xrightarrow{\p}1$ because $1-F_2(Y_{n-k,n})$ is the $(k+1)$th order statistics of a random sample from the standard uniform distribution.
 
We first investigate the asymptotic behavior of 
\[
\tilde{\theta}_{\frac{ky}{n}}=\frac{1}{ky}\sum_{i=1}^n X_i\1_{\left\{Y_i>U_2\left(\frac{n}{ky}\right)\right\}}
\]
as a random process for $y\in[1/2,2]$.  

Let $W(y)$ denote a mean zero Gaussian process on $[1/2, 2]$ with covariance structure  
\[
\E[W(y_1)W(y_2)]=-\frac{{1}}{y_1y_2}\int_0^\infty c(x,y_1\wedge y_2)\,\dd x^{-2\gamma_1},\quad\quad \quad y_1, y_2\in[1/2, 2].
\]

\begin{prop}
\label{prop:limit_process}
Suppose conditions B(1)-B(5) hold. Let $S_n=\left( \frac{n}{k}\right)^{\frac{1}{2\eta}-\frac{1}{2}}\sqrt{k}$. Then as $n\to\infty$,
\begin{equation}
\left\{\frac{S_n}{U_1(n/k)}\left(\tilde{\theta}_{\frac{ky}{n}}-\theta_{\frac{ky}{n}}\right)\right\}_{y\in[1/2, 2]}
\overset{d}{\to}\left\{W(y)\right\}_{y\in[1/2, 2]}. \label{eq: limit_process}
\end{equation}
{The convergence of the process holds in distribution in the Skorokhod space $D([1/2,2])$.}
\end{prop}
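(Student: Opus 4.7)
The plan is to recast \eqref{eq: limit_process} as a functional CLT for an integrated centered tail empirical process. First I would apply the layer-cake identity $X_i=\int_0^\infty \1_{\{X_i>s\}}\,\dd s$ together with the change of variable $s=xU_1(n/k)$ to write
\[
\tilde\theta_{ky/n}-\theta_{ky/n} = \frac{U_1(n/k)}{y}\int_0^\infty \bigl\{\hat h_n(x,y)-\E\hat h_n(x,y)\bigr\}\,\dd x,
\]
where $\hat h_n(x,y):=\tfrac{1}{k}\sum_{i=1}^n\1_{\{X_i>xU_1(n/k),\,Y_i>U_2(n/(ky))\}}$. Setting $Z_n(x,y):=S_n\{\hat h_n(x,y)-\E\hat h_n(x,y)\}$, the claim reduces to weak convergence of $y\mapsto y^{-1}\int_0^\infty Z_n(x,y)\,\dd x$ in $D([1/2,2])$ to the target Gaussian process $W$.

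For finite-dimensional convergence I would apply Lindeberg's CLT to the triangular array of i.i.d.\ centered summands: the summands are bounded indicators, so Lindeberg is immediate on bounded $x$-ranges, and the $(2+\bar\delta)$-moment bound in B(1) upgrades this to the full $x$-integral via Fubini. The limit covariance comes from $S_n^2=k(n/k)^{1/\eta-1}$ combined with \eqref{eq: asymind}: for $y_1,y_2\in[1/2,2]$,
\[
\mathrm{Cov}\left(\frac{1}{y_1}\int_0^\infty Z_n(x,y_1)\,\dd x,\,\frac{1}{y_2}\int_0^\infty Z_n(x,y_2)\,\dd x\right)\to \frac{1}{y_1 y_2}\iint c\bigl((x_1\vee x_2)^{-1/\gamma_1},y_1\wedge y_2\bigr)\,\dd x_1\,\dd x_2,
\]
and the substitution $x=u^{-1/\gamma_1}$ rewrites the limit as $-(y_1y_2)^{-1}\int_0^\infty c(u,y_1\wedge y_2)\,\dd u^{-2\gamma_1}=\E[W(y_1)W(y_2)]$.

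Tightness in $D([1/2,2])$ is the technical heart of the argument. I would split the $x$-integral at $x=1$, then use B(2) on $x\le1$ and B(3) on $x\ge 1$ to convert the uniform second-order bounds on $c_{n/k}-c$, together with the integrability in B(1), into bounds on $L^2$ increments of $y\mapsto\int_0^\infty Z_n(x,y)\,\dd x$. A chaining or bracketing argument in the spirit of the tail empirical process results in Chapter 7 of \citep{deHaanFerreira2006} should then deliver equicontinuity in $y$, and hence tightness.

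The main obstacle is the unbounded $x$-domain: standard Donsker-class machinery does not apply to the integrated process off the shelf, and one must justify interchanging the weak limit with the $x$-integral by matching B(1)'s integrability against the second-order rate $\tau<0$ of B(2)--(3). I expect the intricate upper bounds on $\alpha$ in B(5) to be precisely the trade-off that makes the deterministic bias $S_n\int\{\E\hat h_n(x,y)-c(x^{-1/\gamma_1},y)\}\,\dd x$ vanish uniformly in $y$, mirroring the bias bookkeeping already carried out in \eqref{eq: thetank}--\eqref{eq: diffcnc}.
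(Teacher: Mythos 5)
Your plan follows essentially the same route as the paper's proof: the layer-cake representation reduces the claim to a functional CLT for the integrated, centered indicator process $\sum_i(Z_{n,i}(y)-\E Z_{n,i}(y))$ with $Z_{n,i}(y)=-\frac{S_n}{ky}\int_0^\infty \1_{\{W_i<\frac{k}{n}x,\,V_i<\frac{k}{n}y\}}\,\dd x^{-\gamma_1}$; finite-dimensional convergence comes from the Lindeberg--Feller CLT using the $(2+\bar\delta)$-moment guaranteed by B(1); your limiting covariance agrees with the paper's after the substitution you indicate (this is Lemma \ref{le:properties}(i)); and tightness is handled by a bracketing argument (the paper verifies the four conditions of Theorem 2.11.9 in \citep{VW96}). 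One small correction of emphasis: since you center by the exact mean $\E\hat h_n(x,y)$ and $\E\tilde\theta_{ky/n}=\theta_{ky/n}$, there is no deterministic bias to remove in this proposition; conditions B(2)--(3), B(5) enter instead through the replacement of $s_n(x)$ by $x$ (the paper's Step 1, via Lemma \ref{le:properties}(iii) and (v)) and through the convergence of $c_{n/k}$ to $c$ needed for the variance limits.

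The one genuine gap is in the tightness step, which you correctly flag as the technical heart but whose plan, as stated, would stall. Converting the uniform second-order bounds B(2)--(3) into $L^2$ increments in $y$ yields a bound of the form $K_3\Delta_n+K_4(n/k)^{\tau}$ on $n\E[\sup_{y_1,y_2\in I}|Z_{n,1}(y_1)-Z_{n,1}(y_2)|^2]$; this suffices for asymptotic equicontinuity, but it cannot produce brackets of $L^2$-size $\epsilon$ once $\epsilon^2\lesssim(n/k)^{\tau}$, so the entropy integral $\int_0^{\delta_n}\sqrt{\log N_{[]}(\epsilon,\F,L_2^n)}\,\dd\epsilon$ is not controlled near $\epsilon=0$. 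The paper resolves this with a second, H\"older-based increment bound of order $(n/k)^{\frac{1}{\eta}-\frac{1}{p}-\frac{1}{\eta q}}\Delta_n^{1/p}$ with $q=(2+\bar\delta)/2$, obtained from the same $(2+\bar\delta)$-moment you use for the Lindeberg condition, and switches between the two bounds according to whether $\epsilon^2$ is below or above $(n/k)^{\tau^*}$ for some $\tau^*\in(\tau,0)$. This two-regime bracket construction is the extra idea your sketch does not anticipate; without it the ``chaining or bracketing argument'' you invoke does not close.
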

Before proving Proposition \ref{prop:limit_process}, we first show two lemmas. The first lemma states some 
some properties of  the functions $c_t(x,y)$ and $c(x,y)$, that will be used frequently in the proof. The second lemma is established to compute the covariance of the limit process in Proposition \ref{prop:limit_process}.
\begin{lemma}
\label{le:properties}
\begin{description}
\item[(i)] If $\int_0^\infty c(x,y)\,\dd x^{-2\gamma_1}<\infty $, then  
\[
\int_0^\infty  \int_0^\infty c(x_1\wedge x_2,y)\,\dd x_1^{-\gamma_1}\,\dd x_2^{-\gamma_1}=\int_0^\infty c(x,y)\,\dd x^{-2\gamma_1}.
\]
\item[(ii)] The function $y\mapsto \int_0^\infty c(x,y)\,\dd x^{-\gamma_1}$ is Lipschitz, i.e. there exists $C>0$ such that for each $y_1,y_2\in[1/2,2]$, 
\[
\left|\int_0^\infty c(x,y_1)\,\dd x^{-\gamma_1}-\int_0^\infty c(x,y_2)\,\dd x^{-\gamma_1}\right|\leq C|y_1-y_2|.
\]
\item[(iii)] Assumptions B(1), B(3)-B(5) imply that 
\[
\sup_{y\in[1/2,2]}T_n\left|\int_0^\infty c(x,y)\,\dd x^{-\gamma_1}-\int_0^\infty c(s_n(x),y)\,\dd x^{-\gamma_1}\right|\to 0.
\]
\item[(iv)] If Assumptions B(2)-B(3) hold, then, for $\rho=1,2,2+\bar{\delta}$, 
\begin{equation}
\label{eqn:limit_c_n}
\left|\int_0^\infty c_{\frac{n}{k}}{\left(x\wedge \frac{n}{k},y\right)}\,\dd x^{-\rho\gamma_1}-\int_0^\infty c(x,y)\,\dd x^{-\rho\gamma_1}\right|\to 0.
\end{equation}

\item[(v)] If Assumptions B(2), B(3) and B(5) hold, then,
\begin{equation}
\label{eq: cnc1}
\sup_{y\in[1/2,2]}{T_n}\left|\int_0^\infty c_{\frac{n}{k}}{\left(x\wedge \frac{n}{k},y\right)\,\dd x^{-\gamma_1} -\int_0^\infty c(x, y) \,\dd x^{-\gamma_1}} \right|\to 0,
\end{equation}
and,
\begin{equation}
\label{eq: cnc2}
\sup_{y\in[1/2,2]}{T_n}\left|\int_0^\infty c_{\frac{n}{k}}(s_n(x),y){\,\dd x^{-\gamma_1}-\int_0^\infty c(s_n(x),y) \,\dd x^{-\gamma_1} }\right|\to 0,
\end{equation}.
\end{description}
\end{lemma}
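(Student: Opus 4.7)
The plan is to prove the five parts in order, leveraging Fubini for (i), the homogeneity identity $c(x,y)=y^{1/\eta}c(x/y,1)$ for (ii), and the second-order rates encoded in B(2)--B(5) for (iii)--(v). For (i), I would use the representation $c(a,y)=\int_0^a \partial_1 c(s,y)\,\dd s$ (valid since $c(0,y)=0$), write $\1_{s<x_1\wedge x_2}=\1_{s<x_1}\1_{s<x_2}$, and apply Fubini. The two inner Stieltjes integrals $\int_{x_i>s}\dd x_i^{-\gamma_1}=-s^{-\gamma_1}$ collapse the double integral to $\int_0^\infty s^{-2\gamma_1}\partial_1 c(s,y)\,\dd s$, which matches $\int_0^\infty c(s,y)\,\dd s^{-2\gamma_1}$ after integration by parts; the boundary terms vanish under the finiteness hypothesis in (i). Part (ii) is immediate from homogeneity: the substitution $u=x/y$ yields $\int_0^\infty c(x,y)\,\dd x^{-\gamma_1}=y^{1/\eta-\gamma_1}\int_0^\infty c(u,1)\,\dd u^{-\gamma_1}$, a smooth power function of $y$ on $(0,\infty)$, hence Lipschitz on the compact interval $[1/2,2]$.

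For (iii), I would first translate B(4) into the quantitative estimate $s_n(x)/x-1=O(A_1(n/k))$, obtained by inverting the second-order relation on $U_1$. Then $c(s_n(x),y)-c(x,y)$ is handled by first-order expansion, using the continuity of $c(\cdot,1)$ already assumed in Proposition~\ref{prop:limit_theta_p} together with homogeneity to factor out the $y$-dependence. Integration against $\dd x^{-\gamma_1}$ produces a bound proportional to $A_1(n/k)$ times a finite constant, uniformly in $y\in[1/2,2]$. Condition B(5), in particular the argument $1+1/(2\rho_1-1)$ of the minimum, is tailored to guarantee $T_n A_1(n/k)\to 0$.

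For (iv), I would split the integration at $x=1$. On $[0,1]$ condition B(2) gives the pointwise bound $|c_{n/k}(x,y)-c(x,y)|\le C x^{\beta_1}(n/k)^\tau$, integrable against $\dd x^{-\rho\gamma_1}$ because $\beta_1>(2+\bar\delta)\gamma_1\ge\rho\gamma_1$; on $[1,n/k]$ condition B(3) gives the analogous bound with $x^{\beta_2}$, integrable because $\beta_2<\gamma_1\le\rho\gamma_1$. For $x\ge n/k$ the cutoff in $c_{n/k}(x\wedge n/k,y)$ makes the integrand constant in $x$, and the tail contribution is controlled via the homogeneity of $c$ together with B(1). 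Combined with the pointwise convergence $c_{n/k}\to c$ from \eqref{eq: asymind}, dominated convergence yields (iv).

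Part (v) sharpens (iv) to rate $o(1/T_n)$. Multiplying the same two bounds by $T_n$ reduces the claim to showing $T_n(n/k)^\tau\cdot(\text{finite integral})\to 0$ on each piece, plus controlling the cutoff correction at $x=n/k$; the third argument of the minimum in B(5) is precisely the condition that makes these products vanish. Equation \eqref{eq: cnc2} then follows from \eqref{eq: cnc1} because $s_n(x)/x\to 1$ uniformly, so substituting $x\mapsto s_n(x)$ changes the dominating functions only by a factor $1+o(1)$. I expect the main obstacle to be the large-$x$ regime near the cutoff: the $\beta_2$ bound is tight there, and the precise combination of $\tau$, $\beta_2$, and $\alpha$ furnished by B(5) is essential to get decay strictly faster than $T_n^{-1}$.
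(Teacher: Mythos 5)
Your parts (i), (ii) and (iv) are essentially sound and match the paper's route (for (i) the paper avoids differentiating $c$ by simply splitting the double Stieltjes integral over $\{x_2>x_1\}$ and $\{x_2\le x_1\}$, which is preferable since no smoothness of $c$ is assumed; your exponent $y^{1/\eta-\gamma_1}$ in (ii) is in fact the correct one). The genuine gaps are in (iii) and (v). For (iii), your starting point $s_n(x)/x-1=O(A_1(n/k))$ \emph{uniformly in $x$} is false: the second-order relation gives
\[
\left|\frac{s_n(x)/x-1}{A_1(n/k)}-\frac{x^{-\rho_1}-1}{\gamma_1\rho_1}\right|\leq x^{-\rho_1}\max(x^{\epsilon_0},x^{-\epsilon_0}),
\]
so the error blows up like $x^{-\rho_1+\epsilon_0}$ as $x\to\infty$ (recall $\rho_1<0$), and the resulting integral $\int x^{-\rho_1+\epsilon_0}c(x,y)\,\dd x^{-\gamma_1}$ is \emph{not} a finite constant. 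The paper must truncate at $l_n=(n/k)^{\lambda}$, control $\int_0^{l_n}$ by Cauchy--Schwarz against $\int_1^\infty c(x,y)^2\,\dd x^{-\gamma_1}<\infty$ from B(1) (yielding a bound of order $A_1(n/k)\,l_n^{-\rho_1+\epsilon_0-\gamma_1/2}$), and treat the tail $\int_{l_n}^\infty$ separately using B(3) and a Jensen-inequality bound on $\int_{l_n}^\infty s_n(x)^{\beta_2}\,\dd x^{-\gamma_1}$. This is exactly where the parameter $\lambda$ and the first two arguments of the minimum in B(5) enter; your sketch never uses $\lambda$ and the claimed ``finite constant'' does not exist.

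The same issue defeats your derivation of \eqref{eq: cnc2} from \eqref{eq: cnc1}: $s_n(x)/x\to 1$ only locally uniformly, not uniformly on $(0,\infty)$ (for $x$ of order $n/k$ one only has $s_n(x)\le n/k$), so substituting $x\mapsto s_n(x)$ does not change the dominating integral by a factor $1+o(1)$. The paper instead bounds $\int_1^\infty s_n(x)^{\beta_2}\,\dd x^{-\gamma_1}$ via a change of variables and Jensen's inequality by $O\left((n/k)^{\beta_2(1-\gamma_1)}\right)$ --- a quantity that \emph{diverges} --- and then invokes the third argument of the minimum in B(5) to ensure $\sqrt{k}(n/k)^{-1/(2\eta)+1/2+\tau+\beta_2(1-\gamma_1)}\to 0$. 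You correctly sense that the large-$x$ regime is the obstacle and that B(5) is tailored to it, but the actual mechanism (a diverging integral beaten by the rate condition, not a uniform $1+o(1)$ perturbation) is missing from your argument.
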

\begin{proof}
The first statement follows from simple transformations of the integral. Indeed we have
\[
\begin{split}
&\int_0^\infty \int_0^\infty c(x_1\wedge x_2,y)\,\dd x_1^{-\gamma_1}\,\dd x_2^{-\gamma_1}\\
&=\int_0^\infty\int_{x_1}^\infty c(x_1,y)\,\dd x_2^{-\gamma_1}\,\dd x_1^{-\gamma_1}+\int_0^\infty\int_0^{x_1} c(x_2,y)\,\dd x_2^{-\gamma_1}\,\dd x_1^{-\gamma_1}\\
&=2\int_0^\infty x_1^{-\gamma_1} c(x_1,y)\,\dd x_1^{-\gamma_1}\\
&=\int_0^\infty c(x,y)\,\dd x^{-2\gamma_1}.
\end{split}
\] 
By the homogeneity property of $c(x,y)$: 
$
c(kx,ky)=k^{\frac{1}{\eta}}c(x,y),
$
 we have
\[
\begin{split}
&\left|\int_0^\infty c(x,y_1)\,\dd x^{-\gamma_1}-\int_0^\infty c(x,y_2)\,\dd x^{-\gamma_1}\right|\\
&=\left|\int_0^\infty y_1^{1/\eta}c\left(\frac{x}{y_1},1\right)\,\dd x^{-\gamma_1}-\int_0^\infty y_2^{1/\eta}c\left(\frac{x}{y_2},1\right)\,\dd x^{-\gamma_1}\right|\\
&=\left|y_1^{1/\eta-2\gamma_1}\int_0^\infty c\left(x,1\right)\,\dd x^{-\gamma_1}-y_2^{1/\eta-2\gamma_1}\int_0^\infty c\left(x,1\right)\,\dd x^{-\gamma_1}\right|\\
&\leq K|y_1-y_2|.
\end{split}
\]

(iii) Let $l_n=\left(\frac{n}{k}\right)^{\lambda}$ with $\lambda$ as in Assumption B(5). We start by writing
\begin{equation}
\label{eqn:iii}
\begin{split}
&\sup_{y\in[1/2,2]}T_n\left|\int_0^\infty c(x,y)\,\dd x^{-\gamma_1}-\int_0^\infty c(s_n(x),y)\,\dd x^{-\gamma_1}\right|\\
&\leq \sup_{y\in[1/2,2]}T_n\left|\int_0^{l_n} [c(x,y)-c(s_n(x),y)]\,\dd x^{-\gamma_1}\right|\\
&\quad+\sup_{y\in[1/2,2]}T_n\left\{\left|\int_{l_n}^\infty c(x,y)\,\dd x^{-\gamma_1}\right|+\left|\int_{l_n}^\infty c(s_n(x),y)\,\dd x^{-\gamma_1}\right|\right\}.
\end{split}
\end{equation} First we deal with the first term in the right hand side of~\eqref{eqn:iii}. By the homogeneity property of $c(x,y)$, we have
\[
|c(x_1,y)-c(x_2,y)|\leq \left|\left(\frac{x_2}{x_1}\right)^{1/\eta}-1\right|c(x_1,y)
\]
It follows that,
\[
\begin{split}
&\sup_{y\in[1/2,2]}T_n\left|\int_0^{l_n} [c(x,y)-c(s_n(x),y)]\,\dd x^{-\gamma_1}\right|\\
&\leq \sup_{y\in[1/2,2]}T_n\left|\int_0^{l_n} \left|\left(\frac{s_n(x)}{x}\right)^{1/\eta}-1\right|c(x,y)\,\dd x^{-\gamma_1}\right|. 
\end{split}
\]
Note that, for any $\epsilon_0>0$, for sufficiently large $n$ and $x<l_n$, (see \citep{Cai}, page 85)
\[
\left|\frac{s_n(x)/x-1}{A_1(n/k)}-\frac{x^{-\rho_1}-1}{\gamma_1\rho_1} \right|\leq x^{-\rho_1}\max(x^{\epsilon_0},x^{-\epsilon_0}).
\]
This implies that
\[
\left|\frac{s_n(x)}{x}-1\right|\leq |A_1(n/k)|\left\{\left|\frac{x^{-\rho_1}-1}{\gamma_1\rho_1}\right|+ x^{-\rho_1}\max(x^{\epsilon_0},x^{-\epsilon_0})\right\}.
\]
Since, for $\epsilon_0<-\rho_1(1-\lambda)/\lambda$,
\[
|A_1(n/k)|\left\{\left|\frac{x^{-\rho_1}-1}{\gamma_1\rho_1}\right|+ x^{-\rho_1}\max(x^{\epsilon_0},x^{-\epsilon_0})\right\}=o(1)
\]
by a Taylor expansion, we obtain
\[
\begin{split}
\left|\frac{s_n(x)}{x}-1\right|&=|A_1(n/k)|\left\{\left|\frac{x^{-\rho_1}-1}{\gamma_1\rho_1}\right|+ x^{-\rho_1}\max(x^{\epsilon_0},x^{-\epsilon_0})\right\}\\
&\quad+o\left(A_1(n/k)\left\{\left|\frac{x^{-\rho_1}-1}{\gamma_1\rho_1}\right|+ x^{-\rho_1}\max(x^{\epsilon_0},x^{-\epsilon_0})\right\}\right).
\end{split}
\]
Consequently,
\begin{equation}
\label{eqn:iii_1}
\begin{split}
&\sup_{y\in[1/2,2]}T_n\left|\int_0^{l_n} \left|\left(\frac{s_n(x)}{x}\right)^{1/\eta}-1\right|c(x,y)\,\dd x^{-\gamma_1}\right|\\
&\leq C\sup_{y\in[1/2,2]}T_n|A(n/k)|\left|\int_0^{l_n} x^{-\rho_1}\max(x^{\epsilon_0},x^{-\epsilon_0})c(x,y)\,\dd x^{-\gamma_1}\right|
\end{split}
\end{equation}
Furthermore, using the triangular inequality and Cauchy-Schwartz, we get
\[
\begin{split}
&\left|\int_0^{l_n} x^{-\rho_1}\max(x^{\epsilon_0},x^{-\epsilon_0})c(x,y)\,\dd x^{-\gamma_1}\right|\\
&\leq \int_0^{1} x^{-\rho_1-\epsilon_0}c(x,y)\,\dd x^{-\gamma_1}+\left|\int_1^\infty c(x,y)^2\,\dd x^{-\gamma_1}\right|^{1/2}\left|\int_1^{l_n} x^{-2\rho_1+2\epsilon_0}\,\dd x^{-\gamma_1}\right|^{1/2}\\
&=O\left(l_n^{-\rho_1+\epsilon_0-\frac{\gamma_1}{2}} \right)
\end{split}
\]
Going back to~\eqref{eqn:iii_1}, we obtain
\[
\sup_{y\in[1/2,2]}T_n\left|\int_0^{l_n} \left|\left(\frac{s_n(x)}{x}\right)^{1/\eta}-1\right|c(x,y)\,\dd x^{-\gamma_1}\right|=O\left(\sqrt{k}\left(\frac{n}{k}\right)^{-\frac{1}{2\eta}+\frac{1}{2}-\lambda\frac{\gamma_1}{2}} \right)\to 0,
\]
because of assumption $B(5)$.

Next, we deal with the second term in the right hand side of \eqref{eqn:iii}. By Cauchy-Schwartz and assumption $B(1)$, we obtain
\begin{equation}
\label{eqn:iii}
\begin{split}
\left|\int_{l_n}^\infty c(x,y)\,\dd x^{-\gamma_1}\right|&\leq \gamma_1\left(\int_{l_n}^\infty x^{-\gamma_1-1}\,\dd x\right)^{1/2}\left(\int_{1}^\infty c(x,y)^2x^{-\gamma_1-1}\,\dd x\right)^{1/2}\\
&\leq C l_n^{-\gamma_1/2}
\end{split}
\end{equation}
for some constant $C>0$. Moreover, since 
\[
T_nl_n^{-\gamma_1/2}=\sqrt{k}\left(\frac{n}{k}\right)^{-\frac{1}{2\eta}+\frac{1}{2}-\lambda\frac{\gamma_1}{2}},
\]
by assumption $B(5)$, it follows that 
\[
\sup_{y}T_n\left|\int_{l_n}^\infty c(x,y)\,\dd x^{-\gamma_1}\right|\to 0.
\]
Furthermore, the triangular inequality yields
\[
\begin{split}
&\left|\int_{l_n}^\infty c(s_n(x),y)\,\dd x^{-\gamma_1}\right|\\
\leq &\left|\int_{l_n}^\infty c_{\frac{n}{k}}(s_n(x),y)\,\dd x^{-\gamma_1}\right|+\left|\int_{l_n}^\infty [c(s_n(x),y)-c_{\frac{n}{k}}(s_n(x),y)]\,\dd x^{-\gamma_1}\right|\\
\leq &\left|\int_{l_n}^\infty c_{\frac{n}{k}}(s_n(x),y)\,\dd x^{-\gamma_1}\right|+\sup_{\substack{0<x<n/k\\y\in[1/2,2]}}\frac{\left|c_{\frac{n}{k}}(x,y)-c(x,y)\right|}{x^{\beta_2}}\int_{l_n}^\infty s_n(x)^{\beta_2}\,\dd x^{-\gamma_1}.
\end{split}
\]
Note that, by assumption B(3),
\[
\sup_{\substack{0<x<n/k\\y\in[1/2,2]}}\frac{\left|c_{\frac{n}{k}}(x,y)-c(x,y)\right|}{x^{\beta_2}}=O_P\left(\left(\frac{n}{k}\right)^\tau\right).
\]
Then, using the definition of $s_n$, a change of variable and Jensen inequality, we obtain
\begin{equation}
\label{eqn:s_n^beta2}
\begin{split}
\int_{l_n}^\infty s_n(x)^{\beta_2}\,\dd x^{-\gamma_1}&=\int_{l_n}^\infty\left\{\frac{n}{k}\p\left(X>U_1(n/k)x^{-\gamma_1} \right) \right\}^{\beta_2}\,\dd x^{-\gamma_1}\\
&=\left(\frac{n}{k}\right)^{\beta_2}\int_0^{l_n^{-\gamma_1}}\left\{\p\left(X>U_1(n/k)x \right) \right\}^{\beta_2}\,\dd x\\
&\leq\left(\frac{n}{k}\right)^{\beta_2}l_n^{-\gamma_1}\left\{l_n^{\gamma_1}\int_0^{l_n^{-\gamma_1}}\p\left(X>U_1(n/k)x \right) \,\dd x\right\}^{\beta_2}\\
&=\left(\frac{n}{k}\right)^{\beta_2}l_n^{-\gamma_1}\left\{\frac{l_n^{\gamma_1}}{U_1(n/k)}\int_0^{U_1(n/k)l_n^{-\gamma_1}}\p\left(X>x \right) \,\dd x\right\}^{\beta_2}\\
&\leq \left(\frac{n}{k}\right)^{\beta_2-\lambda\gamma_1}\left(\frac{l_n^{\gamma_1}}{U_1(n/k)}\right)^{\beta_2}\E[X]^{\beta_2}\\
&=o\left(\left(\frac{n}{k}\right)^{\beta_2-\gamma_1}\right).
\end{split}
\end{equation}
Hence, assumption B(5) implies
\[
\begin{split}
&\sup_{y}T_n\left|\int_{l_n}^\infty [c(s_n(x),y)-c_{\frac{n}{k}}(s_n(x),y)]\,\dd x^{-\gamma_1}\right|\\
&\sqrt{k}\left(\frac{n}{k}\right)^{-\frac{1}{2\eta}+\frac{1}{2}+\tau+\beta_2-\lambda\gamma_1}\to 0.
\end{split}
\]
On the other hand, using the definition of $s_n$, we get
\[
\begin{split}
&\left|\int_{l_n}^\infty c_{\frac{n}{k}}(s_n(x),y)\,\dd x^{-\gamma_1}\right|\\
&\left|\int_{l_n}^\infty \left(\frac{n}{k}\right)^{1/\eta}\p\left[X>U_1\left(\frac{n}{ks_n(x)}\right),Y>U_2\left(\frac{n}{ky}\right)\right]\,\dd x^{-\gamma_1}\right|\\
&\leq \gamma_1\frac{ky}{n} \left(\frac{n}{k}\right)^{1/\eta}l_n^{-\gamma_1}
\end{split}
\]
and as a result
\begin{equation}
\label{eqn:iii-2}
\sup_{y}T_n\left|\int_{l_n}^\infty c_{\frac{n}{k}}(s_n(x),y)\,\dd x^{-\gamma_1}\right|\leq C \sqrt{k}\left(\frac{n}{k}\right)^{\frac{1}{2\eta}-\frac{1}{2}-\lambda\gamma_1}\to 0,
\end{equation}
because of assumption $B(5)$.

(iv) We write
\begin{equation}
\label{eqn:iv}
\begin{split}
&\sup_{y\in[1/2,2]}\left|\int_0^\infty c_{\frac{n}{k}}{\left(x\wedge\frac{n}{k},y\right)}\,\dd x^{-\rho\gamma_1}-\int_0^\infty c(x,y)\,\dd x^{-\rho\gamma_1}\right|\\
&\leq\sup_{y\in[1/2,2]}\left\{\left|\int_0^{\frac{n}{k}} c_{\frac{n}{k}}\left(x,y\right)\,\dd x^{-\rho\gamma_1}-\int_0^{\frac{n}{k}} c(x,y)\,\dd x^{-\rho\gamma_1}\right|
+\left|c_{\frac{n}{k}}\left(n/k,y\right)\int_{\frac{n}{k}}^\infty \,\dd x^{-\rho\gamma_1}\right|+\left|\int_{\frac{n}{k}}^\infty c(x,y)\,\dd x^{-\rho\gamma_1}\right|\right\}\\
&\leq  \sup_{\substack{0<x<n/k\\y\in[1/2,2]}} \frac{\left|c_{\frac{n}{k}}(x,y)-c(x,y) \right|}{x^{\beta_1}\wedge x^{\beta_2}}\left|\int_0^\infty x^{\beta_1}\wedge x^{\beta_2}\,\dd x^{-\rho\gamma_1}\right|{+\left( \frac{n}{k}\right)^{-\rho\gamma_1}c_{\frac{n}{k}}\left(n/k,2\right)+o(1)}.
\end{split}
\end{equation}
{The first term in the} right hand side of the inequality converges to zero by assumptions B(2)-B(3). {Moreover, by assumption B(1), we have
\[
\left( \frac{n}{k}\right)^{-\rho\gamma_1}c_{\frac{n}{k}}\left(n/k,2\right)\leq \left( \frac{n}{k}\right)^{\frac{1}{\eta}-1-\rho\gamma_1}\to 0.
\]}
(v) We first give the proof for \eqref{eq: cnc2}. By Assumptions B(2) and B(3), we have
\[
\begin{split}
&\sup_{y\in[1/2,2]}{T_n}\left|\int_0^\infty c_{\frac{n}{k}}({s_n(x)}, y)-c({s_n(x)}, y) \,\dd x^{-\gamma_1} \right|\\
&\leq T_n   \sup_{\substack{0<x<n/k\\y\in[1/2,2]}}  \frac{\left|c_{\frac{n}{k}}(x,y)-c(x,y)\right|}{x^{\beta_1}\wedge x^{\beta_2}}\int_0^\infty s_n(x)^{\beta_1}\wedge s_n(x)^{\beta_2}\,\dd x^{-\gamma_1}.\\
&=O\left(T_n\left(\frac{n}{k}\right)^\tau\right)\int_0^\infty s_n(x)^{\beta_1}\wedge s_n(x)^{\beta_2}\,\dd x^{-\gamma_1}.
\end{split}
\]
Next we obtain an upper bound for the integral in the last equality.  Since $s_n(x)$ is monotone and $s_n(1)=1$, we get the following bound for the integral from zero to one,
\[
\int_0^1 s_n(x)^{\beta_1}\,\dd x^{-\gamma_1}<\int_{\R}s_n(x)^{\beta_1}\wedge 1\,\dd x^{-\gamma_1},
\]
which is shown to be $O(1)$  in \citep{Caietal2015} (page 438). Moreover, using the definition of $s_n$, a change of variable and Jensen inequality, we obtain
\[
\begin{split}
\int_1^\infty s_n(x)^{\beta_2}\,\dd x^{-\gamma_1}&=\int_1^\infty\left\{\frac{n}{k}\p\left(X>U_1(n/k)x^{-\gamma_1} \right) \right\}^{\beta_2}\,\dd x^{-\gamma_1}\\
&=\left(\frac{n}{k}\right)^{\beta_2}\int_0^1\left\{\p\left(X>U_1(n/k)x \right) \right\}^{\beta_2}\,\dd x\\
&\leq\left(\frac{n}{k}\right)^{\beta_2}\left\{\int_0^1\p\left(X>U_1(n/k)x \right) \,\dd x\right\}^{\beta_2}\\
&=\left(\frac{n}{k}\right)^{\beta_2}\left\{\frac{1}{U_1(n/k)}\int_0^{U_1(n/k)}\p\left(X>x \right) \,\dd x\right\}^{\beta_2}\\
&\approx\left(\frac{n}{k}\right)^{\beta_2-\gamma_1\beta_2}\E[X].
\end{split}
\]
By Assumption B(5), 
\begin{equation}
\label{eqn:v}
O\left(T_n\left(\frac{n}{k}\right)^{\tau+\beta_2-\gamma_1\beta_2}\right)=\sqrt{k}\left( \frac{n}{k}\right)^{-\frac{1}{2\eta}+\frac{1}{2}+\tau+\beta_2(1-\gamma_1)}\to 0.
\end{equation}
Thus, \eqref{eq: cnc2} is proved.

{The proof for \eqref{eq: cnc1} can be obtained in a similar way. We use the triangular inequality as in~\eqref{eqn:iv} to get
\[
\begin{split}
&\sup_{y\in[1/2,2]}T_n\left|\int_0^\infty c_{\frac{n}{k}}\left(x\wedge\frac{n}{k},y\right)\,\dd x^{-\gamma_1}-\int_0^\infty c(x,y)\,\dd x^{-\gamma_1}\right|\\
&\leq T_n \sup_{\substack{0<x<n/k\\y\in[1/2,2]}} \frac{\left|c_{\frac{n}{k}}(x,y)-c(x,y) \right|}{x^{\beta_1}\wedge x^{\beta_2}}\left|\int_0^\infty x^{\beta_1}\wedge x^{\beta_2}\,\dd x^{-\rho\gamma_1}\right|\\
&~~~+T_n\left( \frac{n}{k}\right)^{-\gamma_1}c_{\frac{n}{k}}\left(n/k,2\right)+T_n\left|\int_{\frac{n}{k}}^\infty c(x,2)\,\dd x^{-\rho\gamma_1}\right|\\
&=A_1+A_2+A_3.
\end{split}
\]
$A_1$ converges to zero by~\eqref{eqn:v}.  Moreover, as in~\eqref{eqn:iii}, 
\[
A_3=O\left(T_n\left(\frac{n}{k}\right)^{-\gamma_1/2}\right)\to 0
\] 
by assumption B(5). Finally, 
\[
A_2= O_P\left(T_n \left(\frac{n}{k}\right)^{-\gamma_1+\frac{1}{\eta}-1}\right)=O_P\left(\sqrt{k}\left(\frac{n}{k}\right)^{\frac{1}{2\eta}-\frac{1}{2}-\gamma_1}\right)\to 0,
\]
(see~\eqref{eqn:iii-2}).}

\end{proof}
\begin{lemma}
\label{le:A_n}
Assume B(1)-B(3). For $y\in[1/2,2]$ and $\rho\in\{1,2,2+\bar{\delta}\}$, define 
\[
A_n(y,\rho)=\left(\frac{n}{k}\right)^{1/\eta}\left({-}\int_0^\infty \1_{\{1-F_1(X_1)<\frac{k}{n}x, 1-F_2(Y_1)<\frac{ky}{n}\}}\,\dd x^{-\gamma_1}\right)^{\rho}.
\]
Then
\[
\E[A_n(y,\rho)]\to -\int_0^\infty c(x,y)\,\dd x^{-\rho\gamma_1}.
\] 
\end{lemma}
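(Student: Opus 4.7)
My plan is to compute $A_n(y,\rho)$ in closed form, evaluate its expectation via the layer-cake representation and Tonelli, and recognise the resulting quantity as $-\int_0^\infty c_{n/k}(x\wedge n/k,y)\,\dd x^{-\rho\gamma_1}$, at which point Lemma~\ref{le:properties}(iv) yields the convergence immediately.

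First, the indicator $\1_{\{1-F_2(Y_1)<ky/n\}}$ inside $A_n$ does not depend on $x$ and factors out. Writing $\1_{\{1-F_1(X_1)<\frac{k}{n}x\}}=\1_{\{x>\frac{n}{k}(1-F_1(X_1))\}}$, the remaining $x$-integral is elementary:
\[
-\int_0^\infty\1_{\{x>a\}}\,\dd x^{-\gamma_1}=-\bigl[x^{-\gamma_1}\bigr]_a^\infty=a^{-\gamma_1},\qquad a=\tfrac{n}{k}(1-F_1(X_1)).
\]
Raising to the $\rho$-th power (using $\1^\rho=\1$ and positivity) and multiplying by $(n/k)^{1/\eta}$ produces the pointwise identity
\[
A_n(y,\rho)=\left(\frac{n}{k}\right)^{1/\eta-\rho\gamma_1}(1-F_1(X_1))^{-\rho\gamma_1}\1_{\{1-F_2(Y_1)<ky/n\}}.
\]

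Next, I would apply the layer-cake identity $(1-F_1(X_1))^{-\rho\gamma_1}=-\int_0^\infty \1_{\{1-F_1(X_1)<t\}}\,\dd t^{-\rho\gamma_1}$, invoke Tonelli (the integrand is nonnegative once the sign of $\dd t^{-\rho\gamma_1}$ is absorbed), and change variable $t=(k/n)x$ to get
\[
\E[A_n(y,\rho)]=-\left(\frac{n}{k}\right)^{1/\eta}\int_0^\infty \p\bigl(1-F_1(X_1)<\tfrac{k}{n}x,\;1-F_2(Y_1)<\tfrac{ky}{n}\bigr)\,\dd x^{-\rho\gamma_1}.
\]
For $0<x<n/k$ the integrand $(n/k)^{1/\eta}\p(\cdot,\cdot)$ equals $c_{n/k}(x,y)$ by definition. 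For $x\geq n/k$ the first event is almost sure (since $1-F_1(X_1)\leq 1$), so the probability collapses to $ky/n$ and the integrand becomes $(n/k)^{1/\eta-1}y$; because $X>0$ forces $U_1(1)=0$, this value coincides with $c_{n/k}(n/k,y)$. Hence
\[
\E[A_n(y,\rho)]=-\int_0^\infty c_{n/k}(x\wedge n/k,y)\,\dd x^{-\rho\gamma_1}.
\]

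The conclusion is then exactly Lemma~\ref{le:properties}(iv), which under B(2)-B(3) gives convergence of the right-hand side to $-\int_0^\infty c(x,y)\,\dd x^{-\rho\gamma_1}$. The main obstacle is the bookkeeping — correctly tracking powers of $n/k$ when raising the inner integral to $\rho$, justifying Tonelli, and matching the extension to $c_{n/k}(\cdot\wedge n/k,y)$ at the boundary $x=n/k$ — after which the statement is just a reading of part (iv) of the preceding lemma.
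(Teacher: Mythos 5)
Your proposal is correct and follows essentially the same route as the paper: compute the inner integral in closed form as $-\1_{\{V_1<ky/n\}}\left(\frac{n}{k}W_1\right)^{-\gamma_1}$, raise to the power $\rho$, recover the expectation as $-\int_0^\infty c_{n/k}\left(x\wedge\frac{n}{k},y\right)\,\dd x^{-\rho\gamma_1}$ via the layer-cake identity and a change of variable, and conclude by Lemma \ref{le:properties}(iv). Your explicit treatment of the boundary regime $x\geq n/k$, where the probability collapses to $ky/n$ and matches $c_{n/k}(n/k,y)$, is a small point the paper leaves implicit in the notation $x\wedge\frac{n}{k}$, but it is handled correctly.
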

\begin{proof}
Denote $W_1=1-F_1(X_1)$ and $V_1=1-F_2(Y_1)$. Then,
we can write the integral as
\[
\int_0^\infty \1_{\{{W}_1<\frac{k}{n}x,V_1<\frac{ky}{n}\}}\,\dd x^{-\gamma_1}=-\1_{\{V_1<\frac{ky}{n}\}}\left(\frac{n}{k}\comR{W}_1 \right)^{-\gamma_1}.
\]
As a result, by~\eqref{eqn:expectation} and a change of variable, we obtain
\[
\begin{split}
\E[A_n(y,\rho)]&=\left(\frac{n}{k}\right)^{1/\eta}\E\left[\1_{\{V_1<\frac{ky}{n}\}}\left(\frac{n}{k}{W}_1 \right)^{-\rho\gamma_1}\right]\\
&=\left(\frac{n}{k}\right)^{1/\eta}\int_0^\infty \p\left[{W}_1<\frac{k}{n}x^{-\frac{1}{\rho\gamma_1}},V_1<\frac{ky}{n}\right]\,\dd x\\
&=-\left(\frac{n}{k}\right)^{1/\eta}\int_0^\infty \p\left[{W}_1<\frac{k}{n}x,V_1<\frac{ky}{n}\right]\,\dd x^{-\rho\gamma_1}\\
&=-\int_0^\infty c_{\frac{n}{k}}{\left(x\wedge\frac{n}{k},y\right)}\,\dd x^{-\rho\gamma_1}.
\end{split}
\]
The statement follows from \eqref{eqn:limit_c_n}.
\end{proof}

\begin{proof}[Proof of Proposition \ref{prop:limit_process}]
For $i=1,\dots,n$, let $W_i=1-F_1(X_i)$ and $V_i=1-F_2(Y_i)$. We write
\[
\begin{split}
\tilde{\theta}_{\frac{ky}{n}}&=\frac{1}{ky}\sum_{i=1}^n\int_0^\infty \1_{\{X_i>x,Y_i>U_2\left(\frac{n}{ky}\right)\}}\,\dd x\\
&=-\frac{U_1(n/k)}{ky}\sum_{i=1}^n\int_0^\infty \1_{\{W_i<\frac{k}{n}s_n(x),V_i<\frac{k}{n}y\}}\,\dd x^{-\gamma_1}
\end{split}
\]
where $s_n(x)$ is defined in \eqref{eq: sn}.

Similarly, we have
\begin{equation}
\label{eqn:theta}
\begin{split}
\theta_{\frac{ky}{n}}=
-\frac{nU_1(n/k)}{ky}\int_0^\infty \p\left(W_1<\frac{k}{n}s_n(x),V_1<\frac{k}{n}y\right)\,\dd x^{-\gamma_1}.
\end{split}
\end{equation}
This means that $\E[\tilde{\theta}_{\frac{ky}{n}}]={\theta}_{\frac{ky}{n}}$, and it enables us to write the left hand side of \eqref{eq: limit_process} as 
\begin{equation*}
\frac{S_n}{U_1(n/k)}\left(\tilde{\theta}_{\frac{ky}{n}}-\theta_{\frac{ky}{n}}\right)
=\sum_{i=1}^n \left(Z^*_{n,i}(y)-\E[Z^*_{n,i}(y)]\right),
\end{equation*}
where
\begin{equation}
\label{eqn:Z*}
Z^*_{n,i}(y)=-\frac{S_n}{ky}\int_0^\infty \1_{\left\{W_i<\frac{k}{n}s_n(x),V_i<\frac{k}{n}y\right\}}\,\dd x^{-\gamma_1}.
\end{equation}

 Recall that we have $\lim_{n\rightarrow\infty}s_n(x)=x$ by the regular variation of $1-F_1$. We shall study a simpler process obtained by replacing $s_n(x)$ with $x$ in \eqref{eqn:Z*}:
\begin{equation}
\label{eqn:Z}
Z_{n,i}(y)=-\frac{S_n}{ky}\int_0^\infty \1_{\left\{W_i<\frac{k}{n}x,V_i<\frac{k}{n}y\right\}}\,\dd x^{-\gamma_1}.
\end{equation}
To prove \eqref{eq: limit_process}, it suffices to show that 
\begin{equation}
\sup_{y\in[1/2,2]}n\E\left[|Z^*_{n,1}(y)-Z_{n,1}(y)|\right]\to 0,  \label{eq: approx}
\end{equation}
and
\begin{equation}
\left\{\sum_{i=1}^n \left(Z_{n,i}(y)-\E[Z_{n,i}(y)]\right)\right\}_{y\in[1/2, 2]}
\overset{d}{\to}\left\{W(y)\right\}_{y\in[1/2, 2]}. \label{eq: limit_process2}
\end{equation}
Note that \eqref{eq: approx} implies that 
\[
\sup_{y\in[1/2,2]}\sum_{i=1}^n\left(Z^*_{n,i}(y)-Z_{n,i}(y)\right)\xrightarrow{\p}0
\]
and
\[
\sup_{y\in[1/2,2]}\sum_{i=1}^n\left(\E[Z^*_{n,i}](y)-\E[Z_{n,i}(y)]\right)\xrightarrow{\p}0
\]

\noindent{\bf Step 1: Proof of \eqref{eq: approx}}\\
Using the definitions and the triangular inequality we write
\[
\begin{split}
n\E\left[|Z^*_{n,1}-Z_{n,1}|\right]&=-\left(\frac{n}{k}\right)^{\frac{1}{2\eta}-\frac{1}{2}}\frac{n}{\sqrt{k}y}\int_0^\infty \p\left(\frac{k}{n}\left(x\wedge s_n(x)\right)< {W}_1<\frac{k}{n}\left(x\vee s_n(x)\right),V_1<\frac{k}{n}y\right)\,\dd x^{-\gamma_1}\\
&=-\frac{T_n}{y}\int_0^\infty \left[c_{\frac{n}{k}} {\left(\left(x\wedge\frac{n}{k}\right)\vee s_n(x),y\right)}-c_{\frac{n}{k}}(x\wedge s_n(x),y) \right]\,\dd x^{-\gamma_1}\\
&\leq-\frac{T_n}{y}\int_0^\infty \left|c(x,y)-c(s_n(x),y) \right|\,\dd x^{-\gamma_1}\\
&\quad-\frac{T_n}{y}\int_0^\infty \left|c_{\frac{n}{k}} {\left(x\wedge\frac{n}{k},y\right)}-c(x,y) \right|\,\dd x^{-\gamma_1}\\
&\quad-\frac{T_n}{y}\int_0^\infty \left|c_{\frac{n}{k}}(s_n(x),y)-c(s_n(x),y) \right|\,\dd x^{-\gamma_1}.
\end{split}
\]
All three terms in the left hand side converge to zero by Lemma \ref{le:properties} (iii) and (v).

\noindent{\bf Step 2: Proof of \eqref{eq: limit_process2}}

We aim to apply Theorem 2.11.9 in \citep{VW96}. We will prove that the four conditions of this theorem are satisfied.
Here $(\F,\rho)=\{[1/2,2],\,\rho(y_1,y_2)=|y_1-y_2|\}$ and $\Vert Z\Vert_\F=\sup_{y\in\F}|Z(y)|$.

a) Fix $\epsilon>0$. Using that $\Vert Z_{n,1}\Vert_\F\leq 4 Z_{n,1}(2)$, we get, with $\bar\delta$ as defined in Assumption B(1).
\begin{align}
n\E\left[\Vert Z_{n,1}\Vert_\F\1_{\{\Vert Z_{n,1}\Vert_\F>\epsilon\}}\right]&\leq 4n\E\left[Z_{n,1}(2) \1_{\{ Z_{n,1}(2)>\epsilon\}}\right] \nonumber\\
&\leq \frac{4n}{ {\epsilon^{1+\bar\delta}}}\E\left[Z^{2+\bar\delta}_{n,1}(2) \right]\nonumber\\
&=\frac{n}{\epsilon^{1+\bar{\delta}}}\frac{S_n^{2+\bar\delta}}{2^{\bar\delta}k^{2+\bar\delta}}\E\left[\left( {-}\int_0^\infty \1_{\left\{W_i<\frac{k}{n}x,V_i<\frac{2k}{n}\right\}}\,\dd x^{-\gamma_1}\right)^{2+\bar\delta}\right]\nonumber\\
&=\frac{1}{ {\epsilon^{1+\bar{\delta}}}2^{\bar\delta}}T_n^{- {\bar\delta}}\E\left[\left(\frac{n}{k}\right)^{1/\eta}\left( {-}\int_0^\infty \1_{\left\{W_i<\frac{k}{n}x,V_i<\frac{2k}{n}\right\}}\,\dd x^{-\gamma_1}\right)^{2+\bar\delta}\right]\nonumber\\
&\to 0. \label{eq: lin1}
\end{align}
The last convergence follows from that $T_n\to \infty$ and Lemma \ref{le:A_n}.

b) Take a sequence  $\delta_n\to 0$. Then, by the triangular inequality and that $y_1, y_2\geq 1/2$, it follows that 
\begin{equation}
\label{eq: Zni}
\begin{split}
&\sup_{|y_1-y_2|<\delta_n}\sum_{i=1}^n\E\left[(Z_{n,i}(y_1)-Z_{n,i}(y_2))^2 \right]\\
&\leq 4\sup_{|y_1-y_2|<\delta_n}\sum_{i=1}^n\left(\frac{n}{k} \right)^{\frac{1}{\eta}-1}\frac{1}{k}\E\left[\left(\int_0^\infty \1_{\{ {W}_i<\frac{k}{n}x, \frac{k}{n}y_2<V_i<\frac{k}{n}y_1\}}\,\dd x^{-\gamma_1}\right)^2 \right]\\
&\quad+\sup_{|y_1-y_2|<\delta_n}\left|\frac{1}{y_2^2}-\frac{1}{y_1^2} \right|\E\left[\left(\frac{n}{k} \right)^{\frac{1}{\eta}}\left(\int_0^\infty \1_{\{ {W}_1<\frac{k}{n}x, V_1<\frac{k}{n}y_1\}}\,\dd x^{-\gamma_1}\right)^2 \right]\\
&= 4\sup_{|y_1-y_2|<\delta_n}\left(\frac{n}{k} \right)^{\frac{1}{\eta}}\E\left[\1_{\{\frac{k}{n}y_1<V_1<\frac{k}{n}y_2\}}\left(\frac{n}{k} {W}_1\right)^{-2\gamma_1}\right]
\quad+\sup_{|y_1-y_2|<\delta_n}\left|\frac{1}{y_2^2}-\frac{1}{y_1^2} \right|\E[A_n(y_1,2)],
\end{split}
\end{equation}
where $A_n(y_1,2)$  is defined as in Lemma \ref{le:A_n}. Thus, the second summand converges to zero $\lim_{n\to\infty}\E[A_n(y_1,2)]<\infty$ and $\delta_n\to 0$. 
 Moreover,  {by the triangular inequality and Lemma \ref{le:properties} (ii, iv)},
\begin{equation}
\begin{split}
&\left(\frac{n}{k} \right)^{\frac{1}{\eta}}\E\left[\1_{\{\frac{k}{n}y<V_1<\frac{k}{n}(y+\delta_n)\}}\left(\frac{n}{k} {W}_1\right)^{-2\gamma_1}\right]\\
&=-\left(\frac{n}{k} \right)^{\frac{1}{\eta}}\int_0^\infty \p\left(\frac{k}{n}y<V_1<\frac{k}{n}(y+\delta_n), {W}_1<\frac{k}{n}x\right)\,\dd x^{-2\gamma_1}\\
&= {\left|\int_0^\infty c_{\frac{n}{k}}\left(x\wedge\frac{n}{k},y+\delta_n\right)\,\dd x^{-2\gamma_1}-\int_0^\infty c_{\frac{n}{k}}\left(x\wedge\frac{n}{k},y\right)\,\dd x^{-2\gamma_1}\right|}\to 0.
\end{split}
\label{eq: Cn}
\end{equation}

c) Let $N_{[]}(\epsilon,\F,L_2^n)$ be the minimal number of sets $N_\epsilon$ in a partition $[1/2,2]=\cup_{j=1}^{N_\epsilon} I_{n,j}^\epsilon$ such that 
\[
\sum_{i=1}^n\E\left[\sup_{y_1,y_2\in I^\epsilon_{n,j}}|Z_{n,i}(y_1)-Z_{n,i}(y_2)|^2\right]\leq \epsilon^2,\qquad\forall\,j=1,\dots,N_{\epsilon}.
\] 
Consider the partition given by $I_{n,j}^\epsilon=[1/2+(j-1)\Delta_n,1/2+j\Delta_n]$. Then $N_\epsilon=3/2\Delta_n$. We aim at finding $\Delta_n$ such that for every sequence $\delta_n\to 0$ it holds
\[
\int_0^{\delta_n}\sqrt{\log N_{[]}(\epsilon,\F,L_2^n) }\,\dd \epsilon\to 0.  
\] 
By the same reasoning for \eqref{eq: Zni}, we obtain
\[
\begin{split}
&n\E\left[\sup_{y_1, y_2\in I^\epsilon_{n,j}}|Z_{n,1}(y_1)-Z_{n,1}(y_2)|^2\right]\\
\leq &\sup_{y_1, y_2\in I^\epsilon_{n,j}}\left|\frac{1}{y_1^2}-\frac{1}{y_2^2}\right| \E A_n(y_1,2)
+4\sup_{y_1, y_2\in I^\epsilon_{n,j}}\left(\frac{n}{k} \right)^{\frac{1}{\eta}}\E\left[\1_{\{\frac{k}{n}y_1<V_1<\frac{k}{n}y_2\}}\left(\frac{n}{k} {W}_1\right)^{-2\gamma_1}\right]\\
=:&B_n+C_n.
\end{split}
\]
For the first term we have $B_n\leq K_1\Delta_n$ for some constant $K_1>0$ by Lemma \ref{le:A_n}. Let $\bar{y}_1=1/2+(j-1)\Delta_n$ and $\bar{y}_2=1/2+j\Delta_n$. Next, we derive two different upper bounds for $C_n$. First, by Holder inequality, we obtain 
\[
\begin{split}
C_n&\leq 4\left(\frac{n}{k}\right)^{\frac{1}{\eta}}\E\left[ \1_{\{\frac{k}{n}\bar{y}_1<V_1<\frac{k}{n}\bar{y}_2 \}}\left( {W}_1\frac{n}{k} \right)^{-2\gamma_1}\right]\\
&\leq 4\left(\frac{n}{k}\right)^{\frac{1}{\eta}}\E\left[ \1_{\{\frac{k}{n}\bar{y}_1<V_1<\frac{k}{n}\bar{y}_2 \}}\right]^{1/p}\E\left[ \1_{\{V_1<\frac{k}{n}\bar{y}_2 \}}\left( {W}_1\frac{n}{k} \right)^{-2q\gamma_1}\right]^{1/q}\\
&\leq 4\left(\frac{n}{k}\right)^{\frac{1}{\eta}-\frac{1}{p}-\frac{1}{\eta q}}|\bar{y}_1-\bar{y}_2|^{\frac{1}{p}}\E[A_n(\bar{y}_2,2q)]\\
&=K_2 \left(\frac{n}{k}\right)^{\frac{1}{\eta}-\frac{1}{p}-\frac{1}{\eta q}}\Delta_n^{\frac{1}{p}},
\end{split}
\]
for some constant $K_2$. The last equality is obtained by applying Lemma \ref{le:A_n} and choosing  $q=(2+\bar{\delta})/2$ and $1/p+1/q=1$.

Second,  by the same reasoning for \eqref{eq: Cn}, the triangular inequality and Lemma \ref{le:properties} (ii), (iv), we get a second bound on $C_n$,
\[
\begin{split}
C_n&\leq -8\int_0^\infty \left[c_{\frac{n}{k}} {\left(x\wedge\frac{n}{k},\bar{y}_2\right)}-c_{\frac{n}{k}} {\left(x\wedge\frac{n}{k},\bar{y}_1\right)}\right]\,\dd x^{-2\gamma_1}\\
&=-8\int_0^\infty [c(x,\bar{y}_2)-c(x,\bar{y}_1]\,\dd x^{-2\gamma_1}\\
&\quad-8\int_0^\infty \left[c_{\frac{n}{k}} {\left(x\wedge\frac{n}{k},\bar{y}_2\right)}-c(x,\bar{y}_2)\right]\,\dd x^{-2\gamma_1}\\
&\quad-8\int_0^\infty \left[c(x,\bar{y}_1)-c_{\frac{n}{k}} {\left(x\wedge\frac{n}{k},\bar{y}_1\right)}\right]\,\dd x^{-2\gamma_1}\\
&\leq K_3\Delta_n+K_4\left(\frac{n}{k}\right)^\tau
\end{split}
\]
for some constants $K_3$ and $K_4$. 

If $\epsilon^2<\left(\frac{n}{k}\right)^{\tau^*}$ for some $\tau^*\in(\tau,0)$  we use the first bound on $C_n$, i.e.  
\[
B_n+C_n\leq 2K_2 \left(\frac{n}{k}\right)^{\frac{1}{\eta}-\frac{1}{p}-\frac{1}{\eta q}}\Delta_n^{\frac{1}{p}}
\]
and by choosing 
\[
\Delta_n=(2K_2)^{-p}\left(\frac{n}{k}\right)^{-\frac{p}{\eta}+1+\frac{p}{\eta q}}\epsilon^{2p},
\]
we get $B_n+C_n\leq \epsilon^2$. Hence 
\[
N_\epsilon\leq\frac{ {3(2K_2)^p}}{\epsilon^{2p}}\left(\frac{n}{k}\right)^{\frac{p}{\eta}-1-\frac{p}{\eta q}}.
\]
Otherwise, if $\epsilon^2>\left(\frac{n}{k}\right)^{\tau^*}$, for sufficiently large $n$,  
\[
K_4\left(\frac{n}{k}\right)^\tau<\frac{1}{2}\left(\frac{n}{k}\right)^{\tau^*}<\frac{1}{2}\epsilon^2
\]
and we use the second bound on $C_n$ with 
\[
\Delta_n=\frac{\epsilon^2}{2(K_1+K_3)},
\]
i.e. we get
\[
B_n+C_n\leq (K_1+K_3)\Delta_n+K_4 \left(\frac{n}{k}\right)^\tau\leq \epsilon^2. 
\]
Hence, in this case,  
\[
N_\epsilon\leq\frac{3 {(K_1+K_3)}}{\epsilon^{2}}.
\]

Now we distinguish between two cases. If $\delta_n\sqrt{\log (n/k)}\to 0$, using $\sqrt{a+b}\leq\sqrt{a}+\sqrt{b}$ and $\log(x)\leq x$ for large $x$, we get
\[
\begin{split}
\int_0^{\delta_n}\sqrt{\log N_{[]}(\epsilon,\F,L_2^n)}\,\dd \epsilon &\leq \int_0^{\delta_n}\sqrt{\left( \frac{p}{\eta}-1-\frac{p}{\eta q}\right)\log (n/k)+2p\log \epsilon^{-1}+\log  {3(2K_2)^p}}\,\dd \epsilon\\
&\leq K\left(\int_0^{\delta_n}\sqrt{\log (n/k)}\,\dd \epsilon+\int_0^{\delta_n}\sqrt{\epsilon^{-1}}\,\dd \epsilon\right)
\end{split}
\]
and the left hand side converges to zero as $\delta_n\to 0$.

On the other hand, if $\delta_n\sqrt{\log (n/k)}\nrightarrow 0$, take $\delta_n^*=(n/k)^{\tau^*}$. Note that $\delta^*_n\sqrt{\log (n/k)}\to 0$. Hence we write 
\[
\begin{split}
\int_0^{\delta_n}\sqrt{\log N_{[]}(\epsilon,\F,L_2^n)}\,\dd \epsilon &=\int_0^{\delta_n^*}\sqrt{\log N_{[]}(\epsilon,\F,L_2^n)}\,\dd \epsilon +\int_{\delta_n^*}^{\delta_n}\sqrt{\log N_{[]}(\epsilon,\F,L_2^n)}\,\dd \epsilon \\
\leq o(1)+\int_{\delta_n^*}^{\delta_n}\sqrt{\log ( {3(K_1+K_3)}/\epsilon^2)}\,\dd \epsilon\\
\leq o(1)+\sqrt{2}\int_{0}^{\delta_n}\sqrt{\epsilon^{-1}}\,\dd \epsilon\to 0.
\end{split}
\]

d) We have to show that the marginals converge, i.e.  {for each $M\in\N$ and }for each $y_1,\dots ,y_M\in[1/2,2]$, the random vector
\[
\left(\sum_{i=1}^n\left(Z_{n,i}(y_1)-\E[Z_{n,i}(y_1)]\right),\dots,\sum_{i=1}^n\left(Z_{n,i}(y_M)-\E[Z_{n,i}(y_M)]\right)\right)
\]
converges to a multivariate normal distribution. It suffices to show  that for each $a_1,\dots,a_M\in\R$ we have
\[
\sum_{j=1}^M a_j\left[ \sum_{i=1}^n\left(Z_{n,i}(y_j)-\E[Z_{n,i}(y_j)]\right)\right]=:\sum_{i=1}^n \left(N_{n,i}-\E[N_{n,i}]\right)
\]
converges a normal distribution, where $N_{n,i}=\sum_{j=1}^M a_j Z_{n,i}(y_j)$. This will follow from the  Lindeberg-Feller central limit theorem (see e.g. \citep{Vaart98}), once we show that for each $\epsilon>0$,
\begin{equation}
\label{eqn:cond1-CLT}
\sum_{i=1}^n\E\left[  | N_{n,i}|^2\1_{\{| N_{n,i}|>\epsilon\}}\right]\to 0
\end{equation}
and
\begin{equation}
\label{eqn:cond2-CLT}
\sum_{i=1}^n\mathrm{Var}\left(N_{n,i}\right)\to\sigma^2_N.
\end{equation}
We proceed with \eqref{eqn:cond1-CLT}. First,
\begin{align*}
\sum_{i=1}^n\E\left[  | N_{n,i}|^2\1_{\{| N_{n,i}|>\epsilon\}}\right]
&=n\E\left[  | N_{n,1}|^2\1_{\{| N_{n,1}|>\epsilon\}}\right]\\
&\leq \frac{n\E\left[  |N_{n,1}|^{2+\bar{\delta}}\right]}{\epsilon^{\bar{\delta}}}
\leq K n\sum_{j=1}^M |a_j|^{2+\bar{\delta}}\frac{\E\left[  |Z_{n,1}(2)|^{2+\bar{\delta}}\right]}{\epsilon^{\bar{\delta}}},
\end{align*}
which converges to zero by \eqref{eq: lin1}. For \eqref{eqn:cond2-CLT}, we write
\[
\begin{split}
\sum_{i=1}^n\mathrm{Var}(N_{n,i})&=n\left\{\E\left[\left( \sum_{j=1}^M a_jZ_{n,1}(y_j)\right)^2 \right]-\left(\E\left[\sum_{j=1}^M a_jZ_{n,1}(y_j) \right]\right)^2 \right\}\\
&=n\E\left[ \sum_{j,k=1}^M a_ja_k Z_{n,1}(y_j)Z_{n,1}(y_k)\right]-\left(\sqrt{n}\sum_{j=1}^M a_j\E\left[Z_{n,1}(y_j) \right]\right)^2 \\
&=n\sum_{j,k=1}^M a_ja_k\E\left[  Z_{n,1}(y_j)Z_{n,1}(y_k)\right]+o(1)
\end{split}
\]
because it is easy to check that $\sqrt{n}\E\left[Z_{n,1}(y_j) \right]\to 0$, for $j=1,\ldots, M$. Moreover, observe that 
\[
\begin{split}
n \E\left[  Z_{n,1}(y_j)Z_{n,1}(y_k)\right]=&\left(\frac{n}{k}\right)^{\frac{1}{\eta}}\frac{1}{y_jy_k}  {\E\left[\left(\int_0^\infty \1_{\{U_1<\frac{k}{n}x,V_1<\frac{k}{n}(y_j\wedge y_k)\}}\,\dd x^{-\gamma_1}\right)^{2}\right]}\\
=&\frac{1}{y_jy_k} {\E[A_n(y_j\wedge y_k,2)]}.
\end{split}
\]
Thus, by Lemma \ref{le:A_n}, it follows that \eqref{eqn:cond2-CLT} holds with
\begin{equation}
\label{eq: var}
\sigma^2_N=-\sum_{j,k=1}^M\frac{ a_ja_k}{y_jy_k}\int_0^\infty c(x,y_j\wedge y_k)\,\dd x^{-2\gamma_1}.
\end{equation}

We have now verified the four conditions required by Theorem 2.11.9 in \citep{VW96}, which leads to the conclusion that $\sum_{i=1}^n(Z_{n,i}-\E[Z_{n,i}])$ converges in distribution to a Gaussian process. Finally, we compute the covariance structure of the limit process. For each $y_1,\,y_2\in[1/2,2],$ by independence, we have 
\[
\begin{split}
\E[W(y_1)W(y_2)]&=\lim_{n\to\infty} \mathrm{Cov}\left( \sum_{i=1}^n Z_{n,i}(y_1), \sum_{i=1}^n Z_{n,i}(y_2)\right)\\
&=\lim_{n\to\infty}n \mathrm{Cov}(Z_{n,1}(y_1), Z_{n,1}(y_2))\\
&=\lim_{n\to\infty}\left(n\E\left[  Z_{n,1}(y_1)Z_{n,1}(y_2)\right]-n\E\left[  Z_{n,1}(y_1)\right]\E\left[  Z_{n,1}(y_2)\right]\right)\\
&= -\frac{1}{y_1y_2}\int_0^\infty c(x,y_1\wedge y_2)\,\dd x^{-2\gamma_1}\\
&=\frac{1}{y_1y_2}\int_0^\infty c\left(x^{-\frac{1}{2\gamma_1}},y_1\wedge y_2\right)\,\dd x.
\end{split}
\]
The fourth equality follows the same reasoning as that for \eqref{eq: var}.

\end{proof}

  \begin{proof}[Proof of Proposition \ref{prop:as.distr_theta}]
Note the convergence speed in this proposition is $\frac{S_n}{U_1(n/k)}$, the same as that in Proposition \ref{prop:limit_process}. By definition 
\[
\hat{\theta}_{\frac{k}{n}}=e_n\tilde{\theta}_{\frac{ke_n}{n}}.
\]
Hence
\begin{equation}
\label{eqn:1}
\begin{split}
\frac{S_n}{U_1(n/k)}\left(\hat{\theta}_{\frac{k}{n}}-\theta_{\frac{k}{n}} \right)&=\frac{S_n}{U_1(n/k)}e_n\left(\tilde{\theta}_{\frac{ke_n}{n}}-\theta_{\frac{ke_n}{n}} \right)+\frac{S_n}{U_1(n/k)}\left(e_n\theta_{\frac{ke_n}{n}}-\theta_{\frac{k}{n}} \right)\\
&=:T_1+T_2.
\end{split}
\end{equation}
First, we show that $T_1\overset{d}{\rightarrow} W(1)$. We start by writing
\[
T_1=e_n\left[\frac{{S_n}}{U_1(n/k)}(\tilde{\theta}_{\frac{ke_n}{n}}-\theta_{\frac{ke_n}{n}})-W(e_n) \right]+e_nW(e_n).
\]
Because $e_n\xrightarrow{\p}1$, the first term of the right hand side, with probability tending to one, is bounded by 
\[
2\sup_{y\in[1/2,2]}\left|\frac{{S_n}}{U_1(n/k)}(\tilde{\theta}_{\frac{ky}{n}}-\theta_{\frac{ky}{n}})-W(y) \right|,
\] 
which is $o_p(1)$ by Proposition \ref{prop:limit_process} and continuous mapping theorem. Moreover, by Corollary 1.11 in \citep{Adler1990} and $e_n\xrightarrow{\p}1$, $W(e_n)\xrightarrow{d} W(1)$. Thus, $T_1\overset{d}{\rightarrow} W(1)$.

Using \eqref{eqn:theta},  we can write 
\[
\begin{split}
\frac{S_n}{U_1(n/k)}\theta_{\frac{ky}{n}}=-\frac{T_n}{y}\int_0^\infty c_{\frac{n}{k}}(s_n(x), y) \,\dd x^{-\gamma_1}.
\end{split}
\]

Thus, $T_2$ can be rewritten as follows
\[
\begin{split}
T_2&=T_n\int_0^\infty \left\{c_{\frac{n}{k}}(s_n(x),e_n)-c_{\frac{n}{k}}(s_n(x),1)\right\}\,\dd x^{-\gamma_1}\\
&=T_n\int_0^\infty \left\{c_{\frac{n}{k}}(s_n(x),e_n)-c(s_n(x),e_n)\right\}\,\dd x^{-\gamma_1}\\
&\quad +T_n\int_0^\infty \left\{c(s_n(x),e_n)-c(s_n(x),1)\right\}\,\dd x^{-\gamma_1}\\
&\quad+T_n\int_0^\infty \left\{c(s_n(x),1)-c_{\frac{n}{k}}(s_n(x),1)\right\}\,\dd x^{-\gamma_1}\\
&=o_p(1)+T_{21}+o(1).
\end{split}
\]
The last equality follows from the fact that $e_n\overset{P}{\to}1$ and \eqref{eq: cnc2}.
Further, we can decompose $T_{21}$ into three terms as follows.
\[
\begin{split}
T_{21}&=T_n\int_0^\infty \left\{c(s_n(x),e_n)-c(x,e_n)\right\}\,\dd x^{-\gamma_1}+T_n\int_0^\infty \left\{c(x,1)-c(s_n(x),1)\right\}\,\dd x^{-\gamma_1}\\
&\quad +T_n\int_0^\infty \left\{c(x,e_n)-c(x,1)\right\}\,\dd x^{-\gamma_1}.\\
&=o_p(1)+o(1)+O_P(T_n|e_n-1|),
\end{split}
\]
by Lemma \ref{le:properties} (iii) and (ii).
Finally,
\[
T_n|e_n-1| =\left(\frac{n}{k}\right)^{-\frac{1}{2\eta}+\frac{1}{2}}\sqrt{k}|e_n-1|\to 0,
\]
because $\sqrt{k}|e_n-1|=O_P(1)$ (see (26) in \citep{Caietal2015}). Consequently, $T_2=o_p(1)$ and it  has no contribution in the limit distribution.

\end{proof}

\bibliographystyle{plainnat}
\bibliography{EVT}

\end{document}